\documentclass[10pt, reqno]{amsart}
\usepackage{graphicx, amssymb, amsmath, amsthm}
\numberwithin{equation}{section}

\let\Re=\undefined\DeclareMathOperator*{\Re}{Re}

\newcommand{\R}{\mathbb{R}}
\newcommand{\C}{\mathbb{C}}

\newcommand{\la}{\lambda}

\newtheorem{theorem}{Theorem}[section]

\newtheorem{lemma}[theorem]{Lemma}

\newtheorem{proposition}[theorem]{Proposition}

\theoremstyle{definition}
\newtheorem{definition}[theorem]{Definition}
\newtheorem{remark}[theorem]{Remark}

\theoremstyle{remark}

\makeatletter
\newcommand{\Extend}[5]{\ext@arrow0099{\arrowfill@#1#2#3}{#4}{#5}}
\makeatother

\begin{document}
\title[Scattering for NLS with inverse-square potential]{Scattering theory for nonlinear Schr\"odinger equations with inverse-square potential}
\author{Junyong Zhang}
\address{Department of Mathematics, Beijing Institute of Technology, Beijing 100081 China,
and Department of Mathematics, Australian National University,
Canberra ACT 0200, Australia} \email{zhang\_junyong@bit.edu.cn}
\author{Jiqiang Zheng}
\address{Universit\'e Nice Sophia-Antipolis, 06108 Nice Cedex 02, France, and Institut Universitaire de France}
\email{zhengjiqiang@gmail.com} \maketitle

\begin{abstract}
We study the long-time behavior of solutions to nonlinear
Schr\"o-dinger equations with some critical rough potential of
$a|x|^{-2}$ type. The new ingredients are the interaction
Morawetz-type inequalities and Sobolev norm property associated with
$P_a=-\Delta+a|x|^{-2}$. We use such properties to obtain the
scattering theory for the defocusing energy-subcritical nonlinear
Schr\"odinger equation with inverse square potential in energy space
$H^1(\R^n)$.
\end{abstract}

 \maketitle

\begin{center}
 \begin{minipage}{100mm}
   { \small {\bf Key Words:}  Nonlinear Schr\"odinger equation; Inverse square potential; Well-posedness;
   Interaction Morawetz estimates; Scattering.
      {}
   }\\
    { \small {\bf AMS Classification:}
      {35P25,  35Q55.}
      }
 \end{minipage}
 \end{center}





\section{Introduction}

This paper is devoted to the scattering theory for the nonlinear
defocusing  Schr\"odinger equation
\begin{equation}\label{Eq3}
\begin{cases}
i\partial_{t} u-P_a u=|u|^{p-1}u\quad (t,x)\in\R\times\R^n\\
u|_{t=0}=u_0\in H^1(\R^n)
\end{cases}
\end{equation} where $u:\R_t\times\R_x^n\to \C$ and
$P_a=-\Delta+a|x|^{-2}$ with $a>-\la_n:=-(n-2)^2/4$ and $n\geq3$.
The elliptic operator $P_a$ is the self-adjoint extension of
$-\Delta+a|x|^{-2}$. It is well-known that in the range
$-\la_n<a<1-\la_n$, the extension is not unique; see
\cite{KSWW,Tit}. In this case, we do make a choice among the
possible extensions, such as Friedrichs extension \cite{KSWW,PSS}.

\vspace{0.2cm}

The scale-covariance elliptic operator $P_a=-\Delta+a|x|^{-2}$
appearing in \eqref{Eq3} plays a key role  in many problems of
physics and geometry. The heat and Schr\"odinger flows for the
elliptic operator $-\Delta+a|x|^{-2}$ have been studied in the
theory of combustion (see \cite{LZ}), and in quantum mechanics (see
\cite{KSWW}). The mathematical interest in these equations with
$a|x|^{-2}$ however comes mainly from the fact that the potential
term is homogeneous of degree $-2$ and therefore scales exactly
the same as the Laplacian. There is extensive literature on
properties of the Schr\"odinger semigroup of operators
$e^{it\mathrm{H}}$ generated by $\mathrm{H}=-\Delta+V(x)$, where a
potential $V(x)$ is less singular than the inverse square potential
at the origin, for instance,  when it belongs to the Kato class; see
\cite{DFVV,Schlag,SSS1,SSS2}. The inverse square potential
$a|x|^{-2}$ does not belong to the Kato class and it is well known
that such singular potential belongs to a borderline case, where
both the strong maximum principle and Gaussian bound of the heat
kernel for $P_a$ fail to hold when $a$ is negative. Because of this,
it brings some difficulties to study the heat and dispersive
equations with the inverse square potential; see \cite{BPSS,LZ}.
Fortunately, the Strichartz estimates, an essential tool for
studying the behavior of solutions to nonlinear Schr\"odinger
equations and wave equations, have been developed by
Burq-Planchon-Stalker-Tahvildar-Zadeh \cite{BPSS,BPST2}. In the study of Strichartz estimates for
the propagators $e^{it(\Delta+V)}$, the decay $V(x)\sim |x|^{-2}$ is borderline in order to guarantee validity of Strichartz
estimate; see Goldberg-Vega-Visciglia \cite{GVV}. And also it is known that for any potential $V(x)\sim |x|^{-2-\epsilon}$,
the Strichartz estimates are satisfied, without any further assumption on the monotonicity or regularity of $V(x)$; see Rodnianski-Schlag \cite{RS}. Moreover 
recently the Hardy type potentials have been further studied in Fanelli-Felli-Fontelos-Primo \cite{FFFP}. It is
well-known that inverse square potential is in some sense critical
for the spectral theory. This is closely related to the fact that
the angular momentum barrier $k(k+1)/|x|^2$ is exactly same type as
the inverse square potential. As a consequence, the authors
\cite{MZZ,ZZ} showed some more Strichartz estimates and restriction
estimates for wave equation with inverse square potential by
assuming additional angular regularity. In this paper, we study the
scattering theory of nonlinear Schr\"odinger equation \eqref{Eq3}
with the critical decay inverse square potential.\vspace{0.1cm}

The scattering theory of the nonlinear Schr\"odinger equation with no potential, that is
$a=0$,
has been intensively studied in
\cite{Bour98,Bo99a,Cav,CaW,CKSTT08,GV79,GV85}. For the
energy-subcritical case: $p\in(1+\tfrac4n,1+\tfrac4{n-2})$ when
$n\geq3$, and $p\in(1+\tfrac4n,+\infty)$ when $n\in\{1,2\}$, one can obtain the global well-posedness
for \eqref{Eq3} with $a=0$ by using the mass and energy
conservation due to the lifespan of the local solution depending only on the $H^1$-norm of
the initial data. In \cite{GV85}, Ginibre-Velo established the
scattering theory in the energy space $H^1(\R^n)$ by using the
classical Morawetz estimate for low spatial and almost finite
propagator speed for high spatial.  The dispersive estimate is
an essential tool in their argument. However in our setting, in
particular when $a$ is negative, even though we have the Strichartz
estimates, we do not know whether the dispersive estimate holds or
not. Later, Tao-Visan-Zhang \cite{TVZ} gave a simplified proof for
the result in \cite{GV85} by making use of the following interaction
Morawetz estimate
\begin{equation}\label{mor}
\big\||\nabla|^{\frac{3-n}4}u\big\|_{L_t^4(I;L_x^4(\R^n))}^2\leq
C\|u_0\|_{L^2}\sup_{t\in I}\|u(t)\|_{\dot H^{\frac12}},\quad n\geq3.
\end{equation} For this estimate, Visciglia \cite{V}  gave an
alternative application of the interaction Morawetz estimate.

To prove the scattering theory, we follow Tao-Visan-Zhang's
argument. Thus one of our main task is to establish an interaction
Morawetz estimate for the nonlinear Schr\"odinger equation
\eqref{Eq3} with inverse square potential. To this end, we have to
treat an error term caused by the potential. Though we cannot show
the positivity of this error term, we can control the error term by
the quantity in the right hand side of \eqref{mor}. The method to control the error term is, as with the classical
Morawetz inequality, a `multiplier' argument based on the first order differential operator $A=\frac12(\partial_r-\partial_r^*)$.
The two key points are the positivity of the commutator $[A,\Delta-a|x|^{-2}]$ and the homogeneity of  the potential $a|x|^{-2}$, which is the same as
Laplacian's scaling. Thus we finally obtain an
analogue of the interaction Morawetz-type estimate. We are known that
the Leibniz rule plays a role in proving the well-posedness.
However, we do not know whether the Leibniz rule associated with the
operator $P_a$ holds or not. Instead, we show the equivalence of the
Sobolev norms based on the operator $P_a$ and the standard Sobolev
norms based on the Laplacian by using results on the boundedness of
Riesz transform Hassell-Lin \cite{HL} and heat kernel estimate
\cite{LS,MS}. Though the Sobolev norms equivalence result partially
implies Sobolev algebra property associated with $P_a$, it is enough
for considering the scattering theory in energy space $H^1(\R^n)$ to
obtain our main result. \vspace{0.2cm}

The main purpose of this paper is to prove the following result.
\begin{theorem}\label{thm} Let $n\geq3$ and let $p\in\big(1+\tfrac4n,1+\tfrac4{n-2}\big)$. Assume that $a>-\frac{4p}{(p+1)^2}\lambda_n$ and $u_0\in H^1(\R^n)$.
Then the solution $u$ to \eqref{Eq3} is global. Moreover, the solution
$u$ scatters if $a\geq\frac4{(p+1)^2}-\lambda_n$ for $n\geq4$, and $a\geq0$ for $n=3$.

Here the solution $u$ to \eqref{Eq3} scatters means that there exists  a unique $u_\pm\in H^1(\R^n)$ such that
$$\lim_{t\to\pm\infty}\|u(t)-e^{itP_a}u_\pm\|_{H^1_x}=0.$$
\end{theorem}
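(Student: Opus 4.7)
My plan is to adapt the Tao-Visan-Zhang argument \cite{TVZ} to the inverse-square setting, substituting $P_a$ for $-\Delta$ throughout. Three main ingredients are needed: an $H^1(\R^n)$ local well-posedness theory for \eqref{Eq3}, an analogue of the interaction Morawetz inequality \eqref{mor} for solutions of \eqref{Eq3}, and a standard bootstrap converting the Morawetz output into a global $H^1$-Strichartz bound from which scattering follows. The local theory is the most mechanical: combining the Strichartz estimates of Burq-Planchon-Stalker-Tahvildar-Zadeh \cite{BPSS,BPST2} for $e^{itP_a}$ with the Sobolev-norm equivalence $\|\sqrt{P_a}\,f\|_{L^q}\sim\|\nabla f\|_{L^q}$, which via the Hassell-Lin bound on the Riesz transform $\nabla P_a^{-1/2}$ together with the heat kernel estimates of \cite{LS,MS} is available for the range of $q$ appropriate to the Strichartz pair used, yields a contraction-mapping construction whose lifespan depends only on $\|u_0\|_{H^1}$. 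The hypothesis $a>-\tfrac{4p}{(p+1)^2}\lambda_n$ is the sharp threshold making this equivalence available at the Sobolev exponent needed to estimate $|u|^{p-1}u$, and together with conservation of mass and of the energy $E(u)=\tfrac12\langle P_a u,u\rangle+\tfrac1{p+1}\|u\|_{L^{p+1}}^{p+1}$ (which is coercive on $H^1$ since $P_a\geq 0$ in the Friedrichs extension whenever $a>-\lambda_n$) this gives the global-in-time $H^1$ bound and hence the global existence asserted.

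The heart of the proof is the interaction Morawetz-type estimate for \eqref{Eq3}. Following the usual tensor-product recipe, I would form
\[
M(t)=2\,\Im\int\!\!\int|u(t,y)|^2\,\frac{x-y}{|x-y|}\cdot\overline{u(t,x)}\nabla u(t,x)\,dx\,dy,
\]
differentiate in $t$, and use the equation to rewrite $\partial_t|u|^2$ and $\partial_t(\bar u\nabla u)$. The $-\Delta$ contribution reproduces the classical positive bulk term controlling $\||\nabla|^{(3-n)/4}u\|_{L^4_{t,x}}^4$, while the potential produces an extra term involving the commutator of the first-order multiplier $A=\tfrac12(\partial_r-\partial_r^*)$ with $a|x|^{-2}$. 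Because $a|x|^{-2}$ scales as $-\Delta$, this commutator is distributionally a multiple of $a|x|^{-3}$, which is favorably signed in dimension $n\geq 4$ when $a\geq 0$ and otherwise can be dominated by the good term through Hardy's inequality at the cost of the quantitative threshold $a\geq\tfrac4{(p+1)^2}-\lambda_n$; for $n=3$ the bulk commutator contribution vanishes identically and one needs $a\geq 0$ so that the leftover potential term is itself a favorable quantity. I expect this step to be the main technical obstacle, since the error term has no definite sign and one must track sharp constants to pin down the stated dimensional thresholds.

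Given the interaction Morawetz inequality, scattering follows in the now-standard way. The Morawetz bound, combined with conservation of $\|u\|_{H^{1/2}}$ and the Sobolev-norm equivalence, gives a global spacetime estimate of the form $\||\nabla|^{(3-n)/4}u\|_{L^4_tL^4_x(\R\times\R^n)}<\infty$, which after interpolation with Strichartz implies smallness of a suitable scattering norm on all but finitely many time intervals. On each interval a Strichartz-based bootstrap then controls the full $H^1$-Strichartz norm of $u$, and summing over the partition yields $\|u\|_{S^1(\R\times\R^n)}<\infty$. Applying Strichartz to the Duhamel formula on the infinite tails produces the asymptotic profiles $u_\pm\in H^1(\R^n)$ with $\|u(t)-e^{itP_a}u_\pm\|_{H^1_x}\to 0$ as $t\to\pm\infty$, completing the proof.
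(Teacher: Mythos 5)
Your overall architecture coincides with the paper's: local well-posedness via the Strichartz estimates of \cite{BPSS} combined with the Hassell--Lin Riesz-transform/Sobolev-norm equivalence, conservation laws plus sharp Hardy for coercivity of the energy, an interaction Morawetz estimate, and the Tao--Visan--Zhang bootstrap. However, the step you yourself identify as ``the main technical obstacle'' --- controlling the potential's contribution to the interaction Morawetz identity --- is resolved incorrectly in your sketch. The error term produced by $V=a|x|^{-2}$ in the tensor-product Morawetz computation is the \emph{interaction} quantity
\begin{equation*}
P_2=\iint |u(t,x)|^2\,\frac{x-y}{|x-y|}\cdot\nabla V(y)\,|u(t,y)|^2\,dx\,dy,
\end{equation*}
whose density is proportional to $a\,\frac{(x-y)\cdot y}{|x-y|\,|y|^{4}}$. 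Because of the factor $(x-y)\cdot y$ this changes sign regardless of the sign of $a$; it is \emph{not} ``distributionally a multiple of $a|x|^{-3}$'' and is \emph{not} favorably signed for $a\geq 0$, $n\geq 4$. (That sign structure is a feature of the classical origin-centered Morawetz multiplier, not of the interaction version; the paper states explicitly that positivity of this error term cannot be shown.) Consequently ``absorb it into the good term via Hardy'' does not work either: the good bulk term is $\||\nabla|^{(3-n)/2}(|u|^2)\|_{L^2_{t,x}}^2$, which does not control $\iint \rho(x)\rho(y)|y|^{-3}$.

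What is actually needed, and what your proposal is missing, is an auxiliary \emph{linear} virial estimate: one bounds $|P_2|$ by $\sup_t\|u\|_{L^2}^2\int_I\int_{\R^n}|x|^{-3}|u|^2\,dx\,dt$ and then proves $\int_I\int_{\R^n}|x|^{-3}|u|^2\,dx\,dt\lesssim \sup_{t\in I}\|u(t)\|_{\dot H^{1/2}}^2$ by a separate multiplier argument with $A=\partial_r+\tfrac{n-1}{2r}$. The commutator $[A,\Delta-a|x|^{-2}]$ contributes $\big(\tfrac12(n-1)(n-3)+2a\big)r^{-3}$ (plus an angular term and, for $n=3$, a positive delta at the origin), so positivity requires $a>\tfrac14-\la_n$ for $n\geq4$ and $a\geq0$ for $n=3$. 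This, not $a\geq\tfrac4{(p+1)^2}-\lambda_n$, is the threshold the Morawetz step imposes; the constant $\tfrac4{(p+1)^2}-\lambda_n$ in the theorem instead comes from requiring the Lebesgue exponents used to estimate $|u|^{p-1}u$ in the scattering bootstrap to lie in the Hassell--Lin range $(r_0,r_1)$ where $\nabla P_a^{-1/2}$ is bounded (it happens to dominate $\tfrac14-\la_n$ in the admissible range of $p$ for $n\ge4$). Until the weighted spacetime bound on $\int|x|^{-3}|u|^2$ is supplied, your interaction Morawetz inequality --- and hence the scattering argument --- is not established.
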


\begin{remark}The assumption $p\in\big(1+\tfrac4n,1+\tfrac4{n-2}\big)$ is needed for the scattering result; the lower bound $p>1+4/n$ can be
improved when one only considers the global well-posedness result;
see Remark \ref{re:local} below. Since
we mainly focus the scattering theory, we only consider the case that $p$ belongs to $\big(1+\tfrac4n,1+\tfrac4{n-2}\big)$ .
\end{remark}

\begin{remark} If $a\geq \frac{4}{(p+1)^2}-\lambda_n$ for $n\geq4$ and $a\geq0$ for $n=3$ , the theorem gives
 scattering result for NLS \eqref{Eq3} with all $p\in\big(1+\tfrac4n,1+\tfrac4{n-2}\big)$.
This result is new and allows some negative inverse-square potential when $n\geq4$. Indeed, the restriction on $a$ is from
$a\geq\max\{\frac{4}{(p+1)^2}-\lambda_n,\frac14-\la_n\}$  where the latter is needed in the establishment of interaction Morawetz estimate.
\end{remark}

\begin{remark} The implicit requirement that $a>-\lambda_n$ not only serves for the positivity of the operator $P_a$ but also needs to bound the kinetic energy.
\end{remark}

If the solution $u$ of \eqref{Eq3}  has sufficient decay at infinity
and smoothness, it conserves  mass
\begin{equation}\label{mass}
M(u)=\int_{\mathbb{R}^n}|u(t,x)|^2dx=M(u_0)
\end{equation}
and energy
\begin{equation}\label{energy}
E(u(t))=\tfrac12\int_{\R^n}|\nabla
u(t)|^2dx+\tfrac{a}2\int_{\R^n}\tfrac{|u(t)|^2}{|x|^2}dx+\tfrac1{p+1}\int_{\R^n}|u(t)|^{p+1}dx=E(u_0).
\end{equation}

The paper is organized as follows. In Section $2$,  as a
preliminaries, we give some notations, recall the Strichartz
estimate and prove a generalized Hardy inequality. Section $3$ is
devoted to proving the interaction Morawetz-type estimates for
\eqref{Eq3}.  We show a result about the Sobolev norm equivalence in Section $4$. In
Section 5, we utilize Morawetz-type estimates and the equivalence of
Sobolev norm to prove Theorem \ref{thm}. \vspace{0.2cm}

{\bf Acknowledgments:}\quad  The authors would like to thank Andrew
Hassell and Changxing Miao for their helpful discussions and
encouragement.  They also would like to thank the referee for useful comments. This research was supported by PFMEC(20121101120044), Beijing Natural Science Foundation(1144014),
National Natural Science Foundation of China (11401024) and Discovery Grant DP120102019 from the
Australian Research Council.\vspace{0.2cm}




\section{Preliminaries}

In this section, we first introduce some notation, and then recall the Strichartz estimates and also give two
remarks about the inhomogeneous Strichartz estimate at the endpoint. We conclude this section by showing a generalized
Hardy inequality.

\subsection{Notations}
First, we give some notations which will be used throughout this
paper. To simplify the expression of our inequalities, we introduce
some symbols $\lesssim, \thicksim, \ll$. If $X, Y$ are nonnegative
quantities, we use $X\lesssim Y $ or $X=O(Y)$ to denote the estimate
$X\leq CY$ for some $C$, and $X \thicksim Y$ to denote the estimate
$X\lesssim Y\lesssim X$. We use $X\ll Y$ to mean $X \leq c Y$ for
some small constant $c$. We use $C\gg1$ to denote various large
finite constants, and $0< c \ll 1$ to denote various small
constants. For any $r, 1\leq r \leq \infty$, we denote by $\|\cdot
\|_{r}$ the norm in $L^{r}=L^{r}(\mathbb{R}^n)$ and by $r'$ the
conjugate exponent defined by $\frac{1}{r} + \frac{1}{r'}=1$. We
denote $a_{\pm}$ to be any quantity of the form $a\pm\epsilon$ for
any $\epsilon>0$. We define $\la_n$ by $\la_n=(n-2)^2/4$.

\subsection{Strichartz estimates:}
To state the Strichartz estimate,
we need the following definition

\begin{definition}[Admissible pairs]\label{def1} A pair of exponents $(q,r)$ is called \emph{Schr\"odinger
admissible},  or denote by $(q,r)\in \Lambda_{0}$ if $$2\leq
q,r\leq\infty,~\tfrac{2}{q}=n\big(\tfrac12-\tfrac{1}{r}\big),~\text{and}~(q,r,n)\neq(2,\infty,2).$$

\end{definition}

The Strichartz estimates for the solution of the linear
Schr\"odinger equation have been developed by Burq-Planchon-
Stalker-Tahvildar-Zadeh \cite{BPSS}.

\begin{proposition}[Linear Strichartz estimate \cite{BPSS}]\label{lse} Let $a>-\la_n$ and let
$(q,r)\in\Lambda_0$.
Then there exists a positive constant $C$ depending on $(n,q,r,a)$,
such that
\begin{equation}\label{lstes}
\|e^{itP_a}u_0\|_{L_t^qL_x^r(\R\times\R^n)}\leq C\|u_0\|_{L^2}.
\end{equation}
Furthermore, we have the estimates associated with $P_a$
\begin{equation}\label{lsede}
\big\|P_a^{1/2}\big(e^{itP_a}u_0\big)\big\|_{L_t^qL_x^r(\R\times\R^n)}\leq
C\big\|P_a^{1/2}u_0\big\|_{L^2_x(\R^n)}\simeq \|u_0\|_{\dot H^1}.
\end{equation}

\end{proposition}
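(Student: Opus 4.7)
Since this proposition is cited from \cite{BPSS}, I would sketch rather than execute the proof. The overall plan is to put $e^{itP_a}$ into the Keel--Tao $TT^*$ framework: conservation of the $L^2$ norm is automatic because $P_a$ is self-adjoint on its Friedrichs form domain, and what remains is to produce a substitute for the usual dispersive bound $\|e^{it\Delta}\|_{L^1\to L^\infty}\lesssim|t|^{-n/2}$. Since this pointwise decay may fail when $a<0$, the substitute I would target is a global Kato smoothing estimate of the form
\[
\left\||x|^{-1}e^{itP_a}f\right\|_{L^2_{t,x}(\R\times\R^n)}\lesssim\|f\|_{\dot H^{-1/2}},
\]
which together with $TT^*$ and complex interpolation produces every admissible Strichartz pair.

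To prove the smoothing inequality I would exploit the rotational invariance of $a|x|^{-2}$: the operator $P_a$ preserves each spherical harmonic sector $\mathcal{H}_k$, and on $\mathcal{H}_k$ it reduces to the Bessel-type radial operator
\[
L_{\nu_k}=-\partial_r^2-\tfrac{n-1}{r}\partial_r+\frac{\nu_k^2-(n-2)^2/4}{r^2},\qquad \nu_k=\sqrt{\lambda_n+a+k(k+n-2)}.
\]
The standing assumption $a>-\lambda_n$ keeps $\nu_k$ real and positive for every $k\geq 0$, so $L_{\nu_k}$ admits a functional calculus given by the Hankel transform of order $\nu_k$. Using this explicit Hankel representation of $e^{-itL_{\nu_k}}$ I would then run a positive commutator (Morawetz-type) argument based on the generator $A=\tfrac12(\partial_r-\partial_r^*)$ to obtain the Kato smoothing bound uniformly in $k$, and finally sum over spherical harmonic modes.

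For the homogeneous inequality \eqref{lsede}, the plan is immediate once \eqref{lstes} is in hand: $P_a^{1/2}$ is defined through the same functional calculus and hence commutes with $e^{itP_a}$, so applying \eqref{lstes} to $P_a^{1/2}u_0$ gives the bound, while the norm equivalence $\|P_a^{1/2}u_0\|_{L^2}\simeq\|u_0\|_{\dot H^1}$ follows from Hardy's inequality $\||x|^{-1}u\|_{L^2}^2\leq \lambda_n^{-1}\|\nabla u\|_{L^2}^2$ combined with the hypothesis $a>-\lambda_n$ (giving both inequalities, one trivially when $a\geq 0$). The main obstacle I expect is to make the positive commutator estimate uniform in the angular momentum $k$ and to handle the endpoint $(q,r)=(2,\tfrac{2n}{n-2})$, where Christ--Kiselev is unavailable and one must instead run $TT^*$ in an atomic space setting; this is the technical heart of the argument in \cite{BPSS}.
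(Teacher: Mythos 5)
The paper itself gives no proof of this proposition --- it is quoted directly from \cite{BPSS} --- so the comparison is really with the argument in that reference. Your sketch does capture its structural ingredients: the decomposition into spherical harmonic sectors, the Hankel-transform diagonalization of the Bessel operators with order $\nu_k=\sqrt{\lambda_n+a+k(k+n-2)}$ (which is the correct order), a weighted $L^2_{t,x}$ smoothing bound for the perturbed flow as the substitute for the unavailable dispersive estimate, and, for \eqref{lsede}, the commutation of $P_a^{1/2}$ with $e^{itP_a}$ together with sharp Hardy to get $\|P_a^{1/2}u_0\|_{L^2}\simeq\|u_0\|_{\dot H^1}$. That last part is complete and correct as you state it.

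Two points in the main part of the sketch are wrong as written, however. First, the smoothing estimate you target, $\||x|^{-1}e^{itP_a}f\|_{L^2_{t,x}}\lesssim\|f\|_{\dot H^{-1/2}}$, is not scale invariant: since $P_a$ and the weight are homogeneous, replacing $f$ by $f(\cdot/\lambda)$ scales the left side like $\lambda^{n/2}$ and the right side like $\lambda^{(n+1)/2}$, so the estimate fails as $\lambda\to0$ even for $a=0$. The correct Kato--Yajima-type bound for this weight is $\||x|^{-1}e^{itP_a}f\|_{L^2_{t,x}}\lesssim\|f\|_{L^2}$. Second, and more seriously, such a smoothing bound cannot be fed into the Keel--Tao $TT^*$ machinery in place of the dispersive estimate; the abstract theorem requires an $L^1\to L^\infty$ (untruncated decay) hypothesis, which is precisely what is missing here. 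The actual mechanism in \cite{BPSS} is perturbative: write $e^{itP_a}u_0=e^{it\Delta}u_0-ia\int_0^te^{i(t-s)\Delta}|x|^{-2}e^{isP_a}u_0\,ds$, apply the free Strichartz estimate to the first term, and control the Duhamel term by composing the dual free smoothing estimate $\big\|\int_0^te^{i(t-s)\Delta}|x|^{-1}F\,ds\big\|_{L^q_tL^r_x}\lesssim\|F\|_{L^2_{t,x}}$ with the perturbed smoothing bound $\||x|^{-1}e^{isP_a}u_0\|_{L^2_{t,x}}\lesssim\|u_0\|_{L^2}$. Without this Duhamel step your outline does not yield \eqref{lstes}; the spherical-harmonics/Hankel analysis enters only to prove the perturbed smoothing bound uniformly for all $a>-\lambda_n$ (for $a\geq0$ a direct positive-commutator argument already suffices, whereas for $a<0$ the angular Laplacian, equivalently the sharp constants in the Bessel estimates, must absorb the negative contribution of the potential).
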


By the duality argument and the Christ-Kiselev lemma \cite{CK}, we
obtain the inhomogeneous Strichartz estimates except the endpoint
$(q,r)=(\tilde q,\tilde r)=(2,\tfrac{2n}{n-2})$.
\begin{proposition}[Inhomogeneous Strichartz estimates]\label{prop1}Let
$a>-\la_n$. Suppose $u:I\times\R^n\to\C$ is a solution to
$(i\partial_t+\Delta-\frac{a}{|x|^2})u=f$ with initial data $u_0$.
Then for any $(q,r),~(\tilde q,\tilde r)\in\Lambda_0$ except $(q,r)=(\tilde q,\tilde r)=(2,\tfrac{2n}{n-2})$,
we have
\begin{equation}\label{str1}
\|u\|_{L_t^qL_x^r(I\times\R^n)}\lesssim \|u(t_0)\|_{L^2(\R^n)}+\|f\|_{L_t^{\tilde
q'}L_x^{\tilde r'}(I\times\R^n)},
\end{equation}
and moreover
\begin{equation}\label{instr}
\big\|P_a^{1/2} u\big\|_{L_t^qL_x^r(I\times\R^n)}\lesssim\|u(t_0)\|_{\dot H_x^1(\R^n)}+
    \big\|P_a^{1/2}
    f\big\|_{L_t^{\tilde q'}L_x^{\tilde r'}(I\times\R^n)}.
\end{equation}

\end{proposition}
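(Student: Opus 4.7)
The plan is to derive Proposition 2.3 from the homogeneous Strichartz estimate (Proposition 2.2) via the standard duality plus Christ–Kiselev machinery. Write the solution via Duhamel:
\[
u(t) = e^{i(t-t_0)P_a}u(t_0) - i\int_{t_0}^{t}e^{i(t-s)P_a}f(s)\,ds.
\]
The linear piece is handled immediately by \eqref{lstes}, which gives $\|e^{i(t-t_0)P_a}u(t_0)\|_{L_t^q L_x^r}\lesssim \|u(t_0)\|_{L^2}$.

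The task therefore reduces to bounding the retarded integral. First I would establish the \emph{untruncated} bilinear estimate
\[
\Bigl\|\int_{\R} e^{i(t-s)P_a}f(s)\,ds\Bigr\|_{L_t^{q}L_x^{r}}
\lesssim \|f\|_{L_t^{\tilde q'}L_x^{\tilde r'}}
\]
by a $TT^*$ argument: since $P_a$ is self-adjoint, the dual of $T: u_0 \mapsto e^{itP_a}u_0$ mapping $L^2 \to L_t^{\tilde q}L_x^{\tilde r}$ is $T^*: g\mapsto \int e^{-isP_a}g(s)\,ds$ mapping $L_t^{\tilde q'}L_x^{\tilde r'}\to L^2$, and composing $T\circ T^*$ (with the two possibly different admissible pairs obtained via polarization) yields the untruncated estimate for any admissible $(q,r)$ and $(\tilde q,\tilde r)$.

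Next, to pass from the untruncated integral over $\R$ to the causal one $\int_{t_0}^{t}$, I would apply the Christ–Kiselev lemma from \cite{CK}. This step is precisely where the double endpoint has to be excluded: Christ–Kiselev requires $\tilde q' < q$ strictly, which fails only when $(q,r)=(\tilde q,\tilde r)=(2,\tfrac{2n}{n-2})$; outside this single configuration, the lemma converts the non-retarded bound into \eqref{str1}.

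Finally, for the derivative estimate \eqref{instr}, I would use that $P_a^{1/2}$ commutes with the propagator $e^{itP_a}$ (both are functional-calculus objects of the same self-adjoint operator). Applying $P_a^{1/2}$ to the Duhamel formula and invoking \eqref{str1} with the new forcing $P_a^{1/2}f$ and new initial data $P_a^{1/2}u(t_0)$, together with the identification $\|P_a^{1/2}u(t_0)\|_{L^2}\simeq \|u(t_0)\|_{\dot H^1}$ already noted in \eqref{lsede}, delivers \eqref{instr}.

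The only genuine obstacle is the endpoint verification: we must check that whenever either $(q,r)$ or $(\tilde q,\tilde r)$ is the endpoint $(2,\tfrac{2n}{n-2})$ but not both, the hypothesis $\tilde q' < q$ required by Christ–Kiselev still holds. A quick inspection of the admissible relation $\tfrac2q = n(\tfrac12-\tfrac1r)$ shows this is automatic, so the argument goes through for every pair except the forbidden double endpoint, matching the statement.
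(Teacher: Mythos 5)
Your argument is correct and is exactly the route the paper takes: the authors derive Proposition \ref{prop1} in one line from the homogeneous estimate of \cite{BPSS} "by the duality argument and the Christ--Kiselev lemma," which is precisely your $TT^*$ factorization for the untruncated operator followed by Christ--Kiselev (whose hypothesis $\tilde q'<q$ fails only at the double endpoint $q=\tilde q=2$), and the derivative version \eqref{instr} follows, as you say, by commuting $P_a^{1/2}$ through $e^{itP_a}$ and using the $L^2$-equivalence $\|P_a^{1/2}u_0\|_{L^2}\simeq\|u_0\|_{\dot H^1}$ from \eqref{lsede}.
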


\begin{remark}\label{rih} Since the dispersive estimate possibly fails when $a<0$ (for wave equation see \cite{PST}), thus one
cannot directly follow Keel-Tao's \cite{KT} argument to obtain the
inhomogeneous Strichartz estimates for the endpoint. However, if
$|a|\leq \epsilon$ where $\epsilon$ is a small enough constant depending on
the Strichartz estimates' constant and the norm
$\||x|^{-2}\|_{L^{n/2,2}}$, then one can prove inhomogeneous
Strichartz estimates at the endpoint. For simplicity, we assume the
initial data $u(t_0)=0$. Indeed, we can write that
\begin{equation}
u(t,x)=\int_0^t e^{i(t-s)P_a} f(s)ds=\int_0^t
e^{i(t-s)\Delta}\left(-a{|x|^{-2}}u+f(s)\right)ds.
\end{equation}

By the endpoint inhomogeneous Strichartz estimates for the classical
Schr\"odinger equation on Lorentz space \cite{KT}, we have
\begin{equation}
\begin{split}
\left\|u\right\|_{L^2_tL^{\frac{2n}{n-2}}}\leq\left\|u\right\|_{L^2_tL^{\frac{2n}{n-2},2}}&\leq
C\left(\epsilon\left\||x|^{-2}u\right\|_{L^2L^{\frac{2n}{n+2},2}}+\|f\|_{L^2L^{\frac{2n}{n+2},2}}\right)\\
&\leq C\left(\epsilon \||x|^{-2}\|_{L^{\frac
n2,2}}\left\|u\right\|_{L^2L^{\frac{2n}{n-2}}}+\|f\|_{L^2L^{\frac{2n}{n+2}}}\right).
\end{split}
\end{equation}
If $\epsilon$ is small such that $C^2\epsilon<1$, then we obtain
the endpoint inhomogeneous Strichartz estimates. We believe that one
can remove the small assumption in the endpoint inhomogeneous Strichartz estimates by following the argument of Hassell-Zhang \cite{HZ} and
considering the spectral measure of Laplacian with inverse square potential on the metric cone.
\end{remark}

\begin{remark} The endpoint inhomogeneous Strichartz estimate
implies the  uniform Sobolev estimate
\begin{equation}
\| (P_a - \alpha)^{-1} \|_{L^r \to L^{r'}} \leq C, \quad r =
\frac{2n}{n+2}, \label{unifSob}\end{equation} where  $C$ is
independent of $\alpha \in \C$. This estimate was proved by Kenig-
Ruiz-Sogge \cite{KRS} for the flat Laplacian without potential, and
by Guillarmou-Hassell\cite{GH} for the Laplacian on nontrapping
asymptotically conic manifolds.  To see this, we choose $w \in
C_c^\infty(\R^n)$ and $\chi(t)$ equal to $1$ on $[-T, T]$ and zero
for $|t| \geq T+1$,
  and let $u(t, x) = \chi(t) e^{i\alpha t} w(x)$.
Then
$$
(i\partial_t + P_a) u = f(t, z), \quad f(t, z) := \chi(t) e^{i\alpha
t} (P_a - \alpha) w(z) + i \chi'(t) e^{i\alpha t} w(z).
$$

Applying the endpoint inhomogeneous Strichartz estimate, we obtain
$$
\| u \|_{L^2_t L^{r'}_z} \leq C \| f \|_{L^2_t L^r_z}.
$$
From the specific form of $u$ and $f$ we have
$$
\| u \|_{L^2_t L^{r'}_z} = \sqrt{2T} \| w \|_{L^{r'}} + O(1), \quad
\|f \|_{L^2_t L^r_z} = \sqrt{2T} \| (P_a - \alpha) w \|_{L^{r}} +
O(1).
$$
Taking the limit $T \to \infty$, we find that
$$
\| w \|_{L^{r'}} \leq C \| (P_a - \alpha) w \|_{L^{r}}.
$$
This implies the uniform Sobolev estimate \eqref{unifSob}. Thus we
have \eqref{unifSob} for $|a|\leq \epsilon$ by previous argument.
\end{remark}

\subsection{The generalized Hardy inequality}
We need the following generalized Hardy inequality:
\begin{lemma}[Hardy inequality] Let $1<p<\infty, 0\leq s<{\frac n p}$, then there exists a constant
$C$ such that for  all $u \in \dot H^s_p(\R^n)$,
\begin{eqnarray}\label{hardy}
\int_{\R^n}\frac{|u(x)|^p}{|x|^{sp}}dx\leq C \|u\|_{\dot H^s_p}^p.
 \end{eqnarray}
\end{lemma}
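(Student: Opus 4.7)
The case $s=0$ reduces to the identity on $L^p$, so I would focus on $0<s<n/p$ and reduce the inequality to a classical weighted estimate for the Riesz potential $I_s=(-\Delta)^{-s/2}$. By the definition of the homogeneous Sobolev space, writing $u = I_s f$ with $f:=(-\Delta)^{s/2}u\in L^p(\R^n)$ yields $\|f\|_{L^p}\sim \|u\|_{\dot H^s_p}$, so that \eqref{hardy} becomes equivalent to the operator bound
$$\bigl\||x|^{-s} I_s f\bigr\|_{L^p(\R^n)} \lesssim \|f\|_{L^p(\R^n)}.$$

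Next, using that $I_s$ is convolution with a constant multiple of $|x|^{s-n}$, the above is equivalent (by duality against $g\in L^{p'}(\R^n)$) to the bilinear inequality
$$\biggl|\int_{\R^n}\int_{\R^n} \frac{g(x)\,f(y)}{|x|^s\,|x-y|^{n-s}}\,dy\,dx\biggr| \lesssim \|g\|_{L^{p'}}\|f\|_{L^p},$$
which is precisely the classical Stein--Weiss weighted Hardy--Littlewood--Sobolev inequality, applied with parameters $\alpha=s$, $\beta=0$, $\lambda=n-s$. The required hypotheses $\alpha+\beta\geq 0$, $\alpha<n/p$ (which is exactly our assumption $s<n/p$), $\beta<n/p'$ (automatic since $p>1$), together with the homogeneity balance $\tfrac{1}{p}-\tfrac{1}{p}=\tfrac{s-\alpha-\beta}{n}$, are all satisfied, so the estimate follows immediately.

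The main obstacle, should one wish to avoid citing Stein--Weiss as a black box, is really just to reproduce its proof: one would dyadically decompose the kernel $|x-y|^{s-n}$ into annular pieces on the scales $|x-y|\sim 2^j$, bound each contribution by a fractional maximal function, and sum a geometric series whose convergence depends crucially on the strict inequality $s<n/p$. In other words, the restriction $s<n/p$ is sharp because it ensures that the weight $|x|^{-sp}$ is locally integrable to a degree compatible with the smoothing order of $I_s$; at the threshold $s=n/p$ one obtains instead a BMO-type endpoint inequality which is well known to fail with the usual power weight, so no further refinement is possible within the scale stated.
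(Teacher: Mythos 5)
Your proposal is correct. The initial reduction is exactly the paper's: write $u=I_sf$ with $f=(-\Delta)^{s/2}u$, so that the claim becomes the $L^p$-boundedness of $|x|^{-s}I_s$. Where you then invoke the Stein--Weiss weighted Hardy--Littlewood--Sobolev inequality (with $\alpha=s$, $\beta=0$, $\lambda=n-s$, whose hypotheses you verify correctly: $s<n/p$, $0<n/p'$, $\alpha+\beta=s>0$, and the scaling balance $\lambda+\alpha+\beta=n$ for $q=p$), the paper instead gives a self-contained elementary proof: it splits the kernel into the near region $|x-y|\leq 100|x|$ and the far region $|x-y|\geq 100|x|$; the near part is handled by the dyadic-annulus/maximal-function argument you sketch (summable geometric series in $2^{js}$ since $s>0$), while the far part is reduced by duality to a one-sided averaging operator $Tg(y)=|y|^{s-n}\int_{|x|\leq|y|/99}|x|^{-s}g(x)\,dx$, which is dominated via H\"older by $(M(|g|^q))^{1/q}$ for a suitable $q$ with $n/(n-s)<q<p'$ --- this is where the hypothesis $s<n/p$ enters in the paper, rather than through your geometric-series summation. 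Citing Stein--Weiss buys brevity and makes the sharpness of the exponent range transparent; the paper's route buys self-containedness using nothing beyond the Hardy--Littlewood maximal theorem. Both are complete proofs of the stated lemma.
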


\begin{remark} This is an improved and extension result of Hardy inequality in  Cazenave \cite{Cav} and Zhang \cite{Zhang}.
The proof heavily relies on the boundedness of the Hardy-Littlewood maximal operator on $L^p$ for $p>1$.
\end{remark}

\begin{proof} It is obvious for $s=0$, hence we only consider $0<s<\frac np<n$.
We begin this proof by recalling the definition of Riesz potential
$I_{\alpha} f$ for $0<\alpha<n$
$$I_{\alpha} f(x)=(-\Delta)^{-\frac{\alpha}2}
f(x)=C_{n,\alpha}\int_{\R^n}|x-y|^{-n+{\alpha}}f(y)dy,$$ where
$C_{n,\alpha}$ is a constant depending on $\alpha$ and $n$. The
norm of homogenous Sobolev space is given by
$$\|f\|_{{\dot
H}^s_p}=\|(-\Delta)^{\frac{s}2} f(x)\|_p=\|I_{-s} f\|_p.$$

Let ${I_{-s} u}=f$, then $u=I_s f$.\;  Thus it suffices to show that
\begin{eqnarray}\label{vhardy}
\bigg\|\frac{I_s f}{|x|^s}\bigg\|_p\leq C\|f \|_p.
\end{eqnarray}
To do so, we write
\begin{align*}
A f(x)&:=\frac{I_s f(x)}{|x|^s}=\int_{\R^n} \frac{f(y)}{|x-y|^{n-s}|x|^s}dy \\
&=\int_{|x-y|\leq 100|x|}\frac{f(y)}{|x-y|^{n-s}|x|^s}dy +
\int_{|x-y|\geq100|x|}\frac{f(y)}{|x-y|^{n-s}|x|^s}dy \\
&=:A_1 f(x) + A_2 f(x).
\end{align*}
To prove \eqref{vhardy}, it suffices to show that both $A_1 $ and
$A_2 $ are strong $(p,p)$ type.

 We first consider $A_1 f$. Notice
that $s>0$, we have
\begin{align*}
A_1 f(x) &=\sum_{j\leq
0}\int_{|x-y|\sim{2^j}100|x|}\frac{|f(y)|}{|x-y|^{n-s}|x|^s}dy\\
&\leq \sum_{j\leq
0}\int_{|x-y|\sim{2^j}100|x|}\frac{|f(y)|}{({2^j}100|x|)^{n-s}|x|^s}dy\\
&\leq\sum_{j\leq
0}\frac{1}{(2^{j}|x|)^n}\int_{|x-y|\leq{2^j}100|x|}|f(y)|dy\cdot
2^{js}\\
&\leq C{\sum_{j\leq 0}2^{js}}Mf(x)\leq C' Mf(x),
\end{align*}
where $M$ is the Hardy-Littlewood maximal operator. By the
boundedness of the Hardy-Littlewood maximal operator for $p>1$, we
obtain $\|A_1 f\|_p\leq C\|f\|_p$.

Next we consider  $A_2 f$. We note that
\begin{equation*}
\begin{split}A_2 f(x)&=\int_{|x-y|\geq100|x|}\frac{f(y)}{|x-y|^{n-s}|x|^s}dy\\&
\leq \int_{|y|\geq{99|x|}}\frac{|f(y)|}{|x-y|^{n-s}|x|^s}dy\leq C\int_{|y|\geq{99|x|}}\frac{|f(y)|}{|y|^{n-s}|x|^s}dy =:B_2
f(x).
\end{split}
\end{equation*}
 For any $g\in\ L^{p'}(\R^n)$, we write
\begin{align*}
\langle B_2 f(x),g(x)\rangle
&=\int_{\R^n}{\int_{|y|\geq{99|x|}}\frac{|f(y)|g(x)}{|y|^{n-s}|x|^s}dy
dx}\\
&=\int_{\R^n}{\frac{1}{|y|^{n-s}}\int_{|x|\leq\frac{|y|}{99}}\frac{g(x)}{|x|^{s}}dx\cdot
|f(y)|dy}\\
&=\langle Tg(y),|f(y)|\rangle,
\end{align*}
where
$$Tg(y)={\frac{1}{|y|^{n-s}}\int_{|x|\leq\frac{|y|}{99}}\frac{g(x)}{|x|^{s}}dx.
}$$ To prove the operator $B_2$ is strong $(p, p)$ type, it is
sufficient to show  by duality
\begin{eqnarray}\label{dual}
\|Tg(y)\|_{p'}\leq C\|g\|_{p'}.
\end{eqnarray}
If we prove $B_2$ is strong $(p, p)$ type, so is $A_2$. Hence we need prove
\eqref{dual}. Since $0<s<n$, we can choose $q>1$ such that ${sq'}<n$. We have $q>\frac{n}{n-s}$. Thus we have by H\"{o}lder's inequality
\begin{align*}
{|Tg(y)|} &\leq
{\frac{1}{|y|^{n-s}}\bigg(\int_{|x|\leq\frac{|y|}{99}}|g(x)|^qdx\bigg)^{\frac{1}{q}}
\bigg(\int_{|x|\leq\frac{|y|}{99}}\frac{1}{|x|^{sq'}}dx\bigg)^{\frac{1}{q'}}} \\
&\leq C\frac{1}{|y|^{n-s}}\cdot\bigg(\int_{|x|\leq\frac{|y|}{99}}|g(x)|^qdx\bigg)^{\frac{1}{q}}\cdot{|y|^{{(n-{sq'})}\frac{1}{q'}}}\\
&\leq C\frac{1}{|y|^{\frac{n}{q}}}\cdot\bigg(\int_{|x-y|\leq2|y|}|g(x)|^qdx\bigg)^{\frac{1}{q}}\\
&\leq C(M(|g|^q))^{\frac{1}{q}}(y).
\end{align*}
For all $p'>q>{\frac{n}{n-s}}$,  one has $1<p<n/s$. Since $p'>q$,
the boundedness of Hardy-Littlewood maximal operator gives
$$\left\|(M(|g|^q))^\frac 1 q\right\|_{L^{p'}}=\left\|M(|g|^q)\right\|_{L^{\frac {p'} q
}}^\frac{1}{q}\leq C \left\||g|^q\right\|_{L^{\frac {p'}
q}}^{\frac{1}{q}} = C \|g\|_{L^{p'}}. $$ Therefore we obtain $T$ is
strong $(p',p')$ type, hence we proves \eqref{dual}. Thus we conclude the proof of this lemma.

\end{proof}




\section{Morawetz-type estimates}
In this section, we derive the quadratic Morawetz identity for
\eqref{Eq3} and then establish the interaction Morawetz estimate.
The Morawetz estimate provides us a decay of the solution to the NLS
with an inverse square potential, which will help us study the asymptotic
behavior of the solutions in the energy space in next section. More precisely, we have

\begin{proposition}[Morawetz-type estimates]\label{Morawetz}
 Let $u$ be an $H^\frac12$-solution to
\eqref{Eq3} on the spacetime slab $I\times\R^n$, the dimension
$n\geq3$ and $a>\tfrac14-\la_n$, then we have
\begin{equation}\label{1.2}
\big\||\nabla|^{\frac{3-n}2}(|u|^2)\big\|_{L^2(I;L^2(\R^n))}\leq
C\|u(t_0)\|_{L^2}\sup_{t\in I}\|u(t)\|_{\dot H^{\frac12}},~t_0\in I,
\end{equation}
and hence
\begin{equation}\label{interac2}
\big\||\nabla|^{\frac{3-n}4}u\big\|_{L_t^4(I;L_x^4(\R^n))}^2\leq
C\|u(t_0)\|_{L^2}\sup_{t\in I}\|u(t)\|_{\dot H^{\frac12}}.
\end{equation}
\end{proposition}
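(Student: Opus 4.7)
My plan is to adapt the interaction Morawetz scheme of Colliander--Keel--Staffilani--Takaoka--Tao, in the streamlined form used by Tao--Visan--Zhang \cite{TVZ} and Visciglia \cite{V}, to the operator $P_a=-\Delta+a|x|^{-2}$. The only genuinely new feature is an error term coming from the inverse-square potential, which will be controlled via the multiplier identity advertised in the introduction.

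First I would fix $t_0\in I$ and work with the tensorised Morawetz action
\[
M(t):=2\,\Im\iint_{\R^n\times\R^n}|u(t,y)|^2\,\frac{x-y}{|x-y|}\cdot\overline{u(t,x)}\,\nabla_x u(t,x)\,dx\,dy.
\]
A standard argument combining Cauchy--Schwarz with a fractional-integration bound yields the uniform control $\sup_{t\in I}|M(t)|\lesssim \|u\|_{L^\infty_t L^2_x}^3\|u\|_{L^\infty_t \dot H^{1/2}_x}$, so \eqref{1.2} reduces to producing a pointwise-in-$t$ lower bound on $\frac{d}{dt}M(t)$ of the form $c_n\||\nabla|^{(3-n)/2}(|u|^2)(t)\|_{L^2}^2$ modulo integrable error terms.

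Next I would differentiate $M(t)$ along \eqref{Eq3}. After discarding the convective/mass-current contributions that cancel under $(x\leftrightarrow y)$ symmetrisation, one is left with $\frac{d}{dt}M(t)=\mathrm{I}_{\mathrm{kin}}(t)+\mathrm{I}_{\mathrm{nl}}(t)+\mathrm{I}_{\mathrm{pot}}(t)$. The kinetic term $\mathrm{I}_{\mathrm{kin}}$ coincides with the free-NLS contribution: integration by parts using $\Delta_x|x-y|=(n-1)/|x-y|$ together with the Riesz-potential representation of $|\nabla|^{-(n-3)}$ produces the desired positive quantity $c_n\||\nabla|^{(3-n)/2}(|u|^2)\|_{L^2}^2$. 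The defocusing nonlinear term $\mathrm{I}_{\mathrm{nl}}(t)$ is manifestly nonnegative. The potential term, after using $\nabla_x(a|x|^{-2})=-2a\,x/|x|^{4}$, simplifies to
\[
\mathrm{I}_{\mathrm{pot}}(t)=-2a\iint|u(t,y)|^2|u(t,x)|^2\,\frac{(x-y)\cdot x}{|x-y|\,|x|^{4}}\,dx\,dy+(x\leftrightarrow y),
\]
which has no definite sign and is the only truly new object to be dealt with.

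To handle $\mathrm{I}_{\mathrm{pot}}$ I would follow the multiplier strategy flagged in the introduction: after freezing $y$, I view the inner $x$-integral as the expectation value of the first-order radial operator $A=\tfrac12(\partial_r-\partial_r^*)$, and exploit the positivity of the commutator $[A,-\Delta+a|x|^{-2}]$, which by a direct radial-spherical computation holds precisely in the range $a>\tfrac14-\la_n$; the threshold arises from the spherical-Laplacian eigenvalue bound combined with the exact matching of the potential's scaling to that of the Laplacian. Coupled with the generalised Hardy inequality proved in the previous section, this allows $\mathrm{I}_{\mathrm{pot}}(t)$ to be absorbed into a small fraction of $\mathrm{I}_{\mathrm{kin}}(t)$. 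Integrating in $t$ over $I$ and invoking the uniform bound on $|M(t)|$ then yields \eqref{1.2}; the second estimate \eqref{interac2} follows from \eqref{1.2} via the pointwise-in-$t$ inequality $\||\nabla|^{(3-n)/4}u(t)\|_{L^4_x}^2\lesssim \||\nabla|^{(3-n)/2}(|u|^2)(t)\|_{L^2_x}$ (a standard bilinear Littlewood--Paley/Plancherel computation) and integration in $t\in I$. The main obstacle I expect is establishing the commutator positivity in the full range $a>\tfrac14-\la_n$ rather than just the easier range $a\ge0$: the factor $(x-y)\cdot x$ has no sign, so the absorption argument must pair the commutator positivity very precisely against the inverse-square weight, and one must additionally approximate $u$ in $H^{1/2}$ by smooth compactly supported functions in order to justify the formal integrations by parts near $x=y$ and $x=0$.
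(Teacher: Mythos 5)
Your overall architecture matches the paper's: tensorised Morawetz action, the same three-way split of $\tfrac{d}{dt}M$, control of $\sup_t|M(t)|$ by the right-hand side, and the reduction of \eqref{interac2} to \eqref{1.2}. You also correctly identify the key new ingredient, the radial multiplier $A$ and the positivity of $[A,\Delta-a|x|^{-2}]$ with threshold $\tfrac12(n-1)(n-3)+2a>0$, i.e.\ $a>\tfrac14-\la_n$. However, the mechanism you propose for the potential error term is not the one that works, and as described it would fail.

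You propose to absorb $\mathrm{I}_{\mathrm{pot}}(t)$ \emph{pointwise in time} into a small fraction of $\mathrm{I}_{\mathrm{kin}}(t)\sim\big\||\nabla|^{\frac{3-n}2}(|u|^2)(t)\big\|_{L^2}^2$. No such fixed-time bound is available: after the elementary estimate $\big|\tfrac{x-y}{|x-y|}\cdot\tfrac{y}{|y|^4}\big|\le|y|^{-3}$ the error is controlled by $\|u(t)\|_{L^2}^2\int|y|^{-3}|u(t,y)|^2dy$, and a fixed-time bound on $\int|y|^{-3}|u|^2$ would require $\dot H^{3/2}$ regularity (it is the $s=3/2$, $p=2$ case of the Hardy inequality), which is neither available for $H^{1/2}$ solutions nor controlled by the kinetic term; by duality, $|\nabla|^{(n-3)/2}(|y|^{-3})\sim|y|^{-(n+3)/2}\notin L^2$ near the origin, so one cannot trade $\int|y|^{-3}\rho$ against $\||\nabla|^{(3-n)/2}\rho\|_{L^2}^2$ with any constant, let alone a small one. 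The paper instead proves the \emph{spacetime} estimate
\begin{equation*}
\int_{t_1}^{t_2}\int_{\R^n}\frac{|u(t,x)|^2}{|x|^3}\,dx\,dt\lesssim_a\sup_{t\in I}\|u(t)\|_{\dot H^{1/2}}^2
\end{equation*}
by integrating the one-particle virial identity $\partial_t\langle Au,u\rangle=i\langle[A,H]u,u\rangle+\cdots$ in time: the commutator produces $(\tfrac12(n-1)(n-3)+2a)r^{-3}$ plus nonnegative angular and nonlinear terms, and the boundary terms $\sup_t|\langle Au,u\rangle|$ are bounded by $\sup_t\|u\|_{\dot H^{1/2}}^2$ via interpolation and Hardy. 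The time-integrated potential error then contributes another copy of the \emph{right-hand side} of \eqref{1.2}; nothing is absorbed into the left-hand side. Your write-up should be restructured accordingly. As a minor point, your claimed bound $\sup_t|M(t)|\lesssim\|u\|_{L^\infty L^2}^3\|u\|_{L^\infty\dot H^{1/2}}$ has the wrong powers; the correct (CKSTT) bound is $\|u\|_{L^\infty L^2}^2\|u\|_{L^\infty\dot H^{1/2}}^2$, which is exactly the square of the right-hand side of \eqref{1.2}.
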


\begin{remark}When $n=3$, it is obvious to see that the result also holds for
$a=0$; see \cite{CKSTT}.
\end{remark}

\begin{proof}
We consider the NLS equation in the form of
\begin{equation}\label{NLS}
i\partial_tu+\Delta u=gu
\end{equation}
where $g=g(\rho,|x|)$ is a real function of $\rho=|u|^2=2T_{00}$ and
$|x|$.  We first recall the conservation laws for free Schr\"odinger
in Tao \cite{Tao}
\begin{equation*}
\begin{split}
\partial_t T_{00}+\partial_j T_{0j}=0,\\
\partial_t T_{0j}+\partial_k T_{jk}=0,
\end{split}
\end{equation*}
where the mass density quantity $T_{00}$ is defined by
$T_{00}=\tfrac12|u|^2,$ the mass current and the momentum density
quantity $T_{0j}=T_{j0}$ is given by $T_{0j}=T_{j0}=\mathrm{Im}(\bar
u\partial_j u)$, and the quantity $T_{jk}$ is
\begin{equation}\label{stress}
T_{jk}=2\mathrm{Re}(\partial_j u
\partial_k\bar u)-\tfrac12\delta_{jk}\Delta(|u|^2),
\end{equation}
for all $j,k=1,...n,$ and $\delta_{jk}$ is the Kroncker delta. Note
that the kinetic terms are unchanged, we see that for \eqref{NLS}
\begin{equation}\label{Local Conservation}
\begin{split}
\partial_t T_{00}+\partial_j T_{0j}&=0,\\
\partial_t T_{0j}+\partial_k T_{jk}&=-\rho\partial_j g.
\end{split}
\end{equation}

By the density argument, we may assume sufficient smoothness and
decay at infinity of the solutions to the calculation and in
particular to the integrations by parts. Let $h$ be a sufficiently
regular real even function defined in $\R^n$, e.g. $h=|x|$. The
starting point is the auxiliary quantity
\begin{equation*}
J=\tfrac12\langle|u|^2, h\ast |u|^2\rangle=2\langle T_{00}, h\ast
T_{00}\rangle.
\end{equation*}
Define the quadratic Morawetz quantity $M=\tfrac14\partial_t J$.
Hence we can precisely rewrite
\begin{equation}\label{3.1}
M=-\tfrac12\langle\partial_jT_{0j}, h\ast
T_{00}\rangle-\tfrac12\langle T_{00}, h\ast
\partial_jT_{0j} \rangle=-\langle T_{00}, \partial_j h\ast
T_{0j} \rangle.
\end{equation}
By \eqref{Local Conservation} and integration by parts, we have
\begin{equation*}
\begin{split}
\partial_tM&=\langle\partial_kT_{0k}, \partial_j h\ast T_{0j} \rangle-\langle T_{00},
\partial_j h\ast\partial_t T_{0j} \rangle\\&=-\sum_{j,k=1}^n\langle T_{0j}, \partial_{jk} h\ast T_{0j} \rangle+\langle T_{00},
\partial_{jk} h\ast T_{jk} \rangle+\langle \rho,
\partial_j h\ast(\rho\partial_j g) \rangle.
\end{split}
\end{equation*}
For our purpose, we note that
\begin{equation}
\begin{split}
\sum_{j,k=1}^n\langle T_{0k},  \partial_{jk} h\ast T_{0j}
\rangle&=\big\langle \mathrm{Im}(\bar u\nabla u), \nabla^2 h\ast
\mathrm{Im}(\bar u\nabla u) \big\rangle\\&=\big\langle \bar u\nabla
u, \nabla^2 h\ast \bar u\nabla u \rangle-\langle \mathrm{Re}(\bar
u\nabla u), \nabla^2 h\ast \mathrm{Re}(\bar u\nabla u) \big\rangle.
\end{split}
\end{equation}
Therefore it yields that
\begin{equation*}
\begin{split}
\partial_tM=&\big\langle \mathrm{Re}(\bar u\nabla
u), \nabla^2 h\ast \mathrm{Re}(\bar u\nabla u)
\big\rangle-\big\langle \bar u\nabla u, \nabla^2 h\ast \bar u\nabla
u \big\rangle\\&+\Big\langle \bar uu,
\partial_{jk} h\ast \big(\mathrm{Re}(\partial_j u \partial_k\bar
u)-\tfrac14\delta_{jk}\Delta(|u|^2)\big) \Big\rangle+\big\langle
\rho,
\partial_j h\ast(\rho\partial_j g) \big\rangle.
\end{split}
\end{equation*}
From the observation
\begin{equation*}
\begin{split}
-\big\langle \bar uu,
\partial_{jk} h\ast\delta_{jk}\Delta(|u|^2) \big\rangle=\big\langle \nabla (|u|^2), \Delta h\ast
\nabla(|u|^2) \big\rangle,
\end{split}
\end{equation*}
we write
\begin{equation}\label{Morawetz equality}
\begin{split}
\partial_tM=\tfrac12\langle \nabla \rho, \Delta h\ast\nabla\rho \rangle+R+\big\langle \rho,
\partial_j h\ast(\rho\partial_j g) \big\rangle,
\end{split}
\end{equation}
where $R$ is given by
\begin{equation*}\label{3.4}
\begin{split}
R&=\big\langle \bar uu, \nabla^2 h\ast (\nabla\bar u \nabla u)
\big\rangle-\big\langle \bar u\nabla u, \nabla^2 h\ast \bar u\nabla
u \big\rangle\\&=\tfrac12\int \Big(\bar u(x)\nabla \bar u(y)-\bar
u(y)\nabla\bar u(x)\Big)\nabla^2h(x-y)\Big(u(x)\nabla
u(y)-u(y)\nabla u(x)\Big)\mathrm{d}x\mathrm{d}y.
\end{split}
\end{equation*}

Since the Hessian of $h$ is positive definite, we have $R\geq0$. Integrating
over time in an interval $[t_1, t_2]\subset I$ yields
\begin{equation*}
\begin{split}
\int_{t_1}^{t_2}\Big\{\frac12\langle \nabla \rho, \Delta
h\ast\nabla\rho \rangle+\langle \rho,
\partial_j h\ast(\rho\partial_j g) \rangle+R\Big\}\mathrm{d}t=-\langle T_{00}, \partial_j h\ast
T_{0j} \rangle\big|_{t=t_1}^{t=t_2}.
\end{split}
\end{equation*}

From now on, we choose $h(x)=|x|$. One can follow the arguments in
\cite{CKSTT} to bound the right hand by the quantity
\begin{equation*}
\Big|\mathrm{Im}\int_{\R^{2n}}|u(x)|^2\frac{x-y}{|x-y|}\bar
u(y)\nabla u(y)dxdy\Big|\leq C\sup_{t\in
I}\|u(t)\|^2_{L^2}\|u(t)\|^2_{\dot H^{\frac12}}.
\end{equation*}
Therefore we conclude
\begin{equation}\label{Morawetz inequality}
\int_{t_1}^{t_2}\big\langle \rho,
\partial_j h\ast(\rho\partial_j g) \big\rangle dt+\big\||\nabla|^{\frac{3-n}2}(|u|^2)\big\|_{L^2(I;L^2(\R^n))}\leq C\sup_{t\in
I}\|u(t)\|_{L^2}\|u(t)\|_{\dot H^{\frac12}}.
\end{equation}

Now we consider the term
\begin{equation*}
\begin{split}
P&:=\big\langle \rho, \nabla h\ast (\rho\nabla g) \big\rangle.
\end{split}
\end{equation*}
Consider $g(\rho,|x|)=\rho^{(p-1)/2}+V(x)$, then we can write
$P=P_1+P_2$ where
\begin{equation}\label{3.5}
\begin{split}
P_1= \big\langle\rho, \nabla h\ast \big(\rho\nabla
(\rho^{(p-1)/2})\big)\big\rangle=\frac{p-1}{p+1}\big\langle\rho,
\Delta h\ast \rho^{(p+1)/2}\big\rangle\geq 0
\end{split}
\end{equation}
and
\begin{equation}\label{P2}
\begin{split}
P_2= \iint\rho(x)\nabla h(x-y)\rho(y)\nabla
\big(V(y)\big)\mathrm{d}x\mathrm{d}y.
\end{split}
\end{equation}

Comparing \eqref{Eq3} and \eqref{NLS}, we see $V(x)=a|x|^{-2}$. We claim that
\begin{equation*}
\begin{split}
\Big|\int_{t_1}^{t_2}P_2dt\Big| =&
\Big|2a\int_{t_1}^{t_2}\iint|u(x)|^2\frac{(x-y)}{|x-y|}\cdot
y|y|^{-4}|u(y)|^2\mathrm{d}x\mathrm{d}ydt\Big|\\
\lesssim&
\sup_{t\in I}\|u_0\|_{L^2}^2\|u(t)\|_{\dot H^{\frac12}}^2.
\end{split}
\end{equation*}
To show this, it suffices to show
\begin{equation}
\begin{split}
\int_{t_1}^{t_2}\int|x|^{-3}|u(t,x)|^2\mathrm{d}xdt\lesssim
\sup_{t\in I}\|u(t)\|_{\dot H^{\frac12}}^2.
\end{split}
\end{equation}

Let $A=\partial_r+\frac{n-1}{2r}$ and $H=\Delta-a|x|^{-2}$, where
$r=|x|$. Now we consider the quantity $\langle Au,u\rangle$. Since
$u_t=i(Hu-f(u))$ with $f(u)=|u|^{p-1}u$, we have
\begin{equation*}
\begin{split}
\partial_t\langle Au,u\rangle= i\big\langle [A,H]u,u\big\rangle+i\big(\langle
Au,f(u)\rangle-\langle Af(u),u\rangle\big).
\end{split}
\end{equation*}

We first consider the term from the nonlinear part
\begin{equation*}
\begin{split}
\Big(\langle Au,f(u)\rangle-\langle Af(u),u\rangle\Big)=\Big(\langle
\partial_ru,f(u)\rangle+\langle f(u),\partial_ru\rangle+\langle f(u),\tfrac{n-1}{|x|}u\rangle\Big).
\end{split}
\end{equation*}
By assuming $u$ rapidly tends to zero as $r\rightarrow\infty$, we
obtain
\begin{align}\nonumber
\Big(\langle Au,f(u)\rangle-\langle
Af(u),u\rangle\Big)&=\frac{2}{p+1}\int_{\R^n}\partial_r\big(|u|^{p+1}(x)\big)dx+\int_{\R^n}\frac{(n-1)|u|^{p+1}}{|x|}dx
\\&=(n-1)\frac{p-1}{p+1}\int_{\R^n}\frac{|u|^{p+1}}{|x|}dx.
\end{align}

Now we consider the term from linear part. Note that the
commutator
\begin{equation}[A,H]=
\begin{cases}
-2\Delta_{\mathbb{S}^{n-1}}r^{-3}+c\delta+2ar^{-3}\qquad n=3;\\
-2\Delta_{\mathbb{S}^{n-1}}r^{-3}+\frac12(n-1)(n-3)r^{-3}+2ar^{-3}\qquad\qquad n\geq4.\\
\end{cases}
\end{equation}
where the constant $c>0$ and $\delta$ is the delta function.
Integrating on a finite time interval $[t_1,t_2]$, we have for
$n=3$
\begin{equation}\label{n3}
\begin{split}
i^{-1}\langle
Au,u\rangle\big|_{t_1}^{t_2}=&2\int_{t_1}^{t_2}\int_{\R^n}\frac{|\nabla_\theta
u|^2}{r^3}dxdt+2\int_{t_1}^{t_2}|u(t,0)|^2dt\\&+2a\int_{t_1}^{t_2}\int_{\R^n}\frac{|u|^2}{r^3}dxdt
\\&+\frac{2(p-1)}{p+1}\int_{t_1}^{t_2}\int_{\R^n}\frac{|u|^{p+1}}{r}dx dt
\end{split}
\end{equation}
and for $n\geq4$
\begin{equation}\label{n4}
\begin{split}
i^{-1}\langle
Au,u\rangle\big|_{t_1}^{t_2}=&2\int_{t_1}^{t_2}\int_{\R^n}\frac{|\nabla_\theta
u|^2}{r^3}dxdt\\&+\big[\frac12(n-1)(n-3)+2a\big]\int_{t_1}^{t_2}\int_{\R^n}\frac{|u|^2}{r^3}dxdt
\\&+(n-1)\frac{p-1}{p+1}\int_{t_1}^{t_2}\int_{\R^n}\frac{|u|^{p+1}}{r}dxdt.
\end{split}
\end{equation}
Note $a>\tfrac14-\la_n$, the constant before the quantity
$\int_{t_1}^{t_2}\int_{\R^n}\frac{|u|^2}{r^3}dxdt$ is strictly
positive. From \eqref{n3} and \eqref{n4}, we have by
interpolation and Hardy's inequality
\begin{equation*}
\begin{split}
\int_{t_1}^{t_2}\int\frac{|u(t,x)|^2}{|x|^3}\mathrm{d}xdt\lesssim_a
\sup_{t\in[t_1,t_2]}\bigg(\Big|\int_{\R^n} \partial_ru \bar{u}
dx\Big|+\int_{\R^n} \frac{|u|^2}{|x|} dx\bigg)\lesssim_a \sup_{t\in
[t_1,t_2]}\|u(t)\|_{\dot H^{\frac12}}^2.
\end{split}
\end{equation*}
Therefore, we conclude the proof of Proposition \ref{Morawetz}.

\end{proof}

\begin{remark}\label{rem1.6} Our strategy can be used to prove the same estimates
for the Schr\"odinger equation with potential $a|x|^{\sigma}$ where
$a\leq0$ and $\sigma>1$. In particular $\sigma=2$, we prove the interaction Morawetz estimates for defocusing NLS with repulsive harmonic potential.
This possibly is used to remove the radial assumption in the repulsive case of \cite{KVZ}, which studied the scattering theory of the energy-critical defocusing NLS with harmonic potential.
In contrast to the inverse square potential, the error term brought by the
potential is positive. Indeed, we consider $V(x)=a|x|^{\sigma}$ in \eqref{P2}. From the observation
$\nabla h$ is odd, it follows that
\begin{equation*}
\begin{split}
P_2=\int\rho(x)\rho(y)\nabla h(y-x)\nabla
\big(V(x)\big)\mathrm{d}x\mathrm{d}y=-\int\rho(x)\rho(y)\nabla
h(x-y)\nabla\big(V(x)\big)\mathrm{d}x\mathrm{d}y.
\end{split}
\end{equation*}
Thus we can write
\begin{equation}
\begin{split}
P_2=\frac12\int\rho(x)\rho(y)\nabla h(x-y)\cdot\big[\nabla
\big(V(y)\big)-\nabla \big(V(x)\big)\big]\mathrm{d}x\mathrm{d}y.
\end{split}
\end{equation}
By the mean value theorem, we easily see that
\begin{align*}
\nabla h(x-y)\cdot\big(\nabla V(y)-\nabla V(x)\big)=&\nabla
h(x-y)\cdot\Big(\nabla V(y)-\nabla V\big((x-y)+y\big)\Big)\\
=&-\nabla
h(x-y)\cdot\int_0^1(x-y)\cdot\nabla^2V\big(y+\theta(x-y)\big)d\theta\\
=&-|x-y|^{-1}\int_0^1(x-y)\otimes(x-y)\nabla^2V\big(y+\theta(x-y)\big)d\theta.
\end{align*}
We see that $\nabla^2V$ is negative when $V(x)=a|x|^\sigma$ with $a<
0$ and $\sigma>1$ by using
$$\nabla^2(|x|^{\sigma})=\sigma|x|^{\sigma-2}\bigg(I_{n\times
n}+(\sigma-2)\frac{(x_1,\cdots,x_n)^T}{|x|}\cdot\frac{(x_1,\cdots,x_n)}{|x|}\bigg),$$
whose the $k$-order principal minor determinant is
$$\bigg|I_{k\times k}+(\sigma-2)\Big(\frac{x_1}{|x|},\cdots,
\frac{x_k}{|x|}\Big)^T\cdot \Big(\frac{x_1}{|x|},\cdots,
\frac{x_k}{|x|}\Big)\bigg|=1+(\sigma-2)\frac{x_1^2+\cdots+x_k^2}{|x|^2}>0.$$
Hence we can check that $P_2$ is nonnegative when
$V(x)=a|x|^{\sigma}$ with $a\leq 0$ and $\sigma>1$. This together
with \eqref{Morawetz inequality} and \eqref{3.5} concludes the proof of
the case $\sigma>1$. \vspace{0.2cm}

\end{remark}




\section{Sobolev norm equivalence}

In this section, we  study the equivalence of the Sobolev norms based on the
operator $P_a$ and the standard Sobolev norms based on the
Laplacian. For simplicity, we define
\begin{equation}\label{r0r1}
r_0=\tfrac{2n}{\min\{n+2+\sqrt{(n-2)^2+4a},2n\}},\quad
r_1=\tfrac{2n}{\max\{n-\sqrt{(n-2)^2+4a},0\}}.
\end{equation}
The purpose of this section is to prove the following
\begin{proposition}[Sobolev norm equivalence]\label{equiv}
Let $n\geq3,~a>-\la_n$. Then, there exist constants $C_1,~C_2>0$
depending on $(n,a,s)$ such that

$\bullet$ when $s=1$ and $r\in(r_0,r_0')\cap (r'_1,r_1)\cap (\frac
n{n-1},n)$
\begin{equation}\label{equinorm1}
C_1\|f\|_{\dot H_r^1(\R^n)}\leq\|P_a^{1/2}f\|_{L^r(\R^n)}\leq
C_2\|f\|_{\dot H_r^1(\R^n)},~~\forall~f\in\dot H_r^1(\R^n);
\end{equation}

$\bullet$ if $a\geq0$, for $0\leq s\leq 1$ and $r\in(r_0'/(r_0'-s),
r_1/s)\cap (1,n/s)$
\begin{equation}\label{equinorm2}
C_1\|f\|_{\dot H_r^s(\R^n)}\leq\|P_a^{s/2}f\|_{L^r(\R^n)}\leq
C_2\|f\|_{\dot H_r^s(\R^n)},~~\forall~f\in\dot H_r^s(\R^n).
\end{equation}
\end{proposition}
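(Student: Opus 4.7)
The plan is to reduce both estimates to $L^r$-boundedness of the transforms $P_a^{s/2}(-\Delta)^{-s/2}$ and $(-\Delta)^{s/2}P_a^{-s/2}$: first handle $s=1$ by invoking the Riesz transform theory of Hassell-Lin \cite{HL}, then extend to $0\le s\le 1$ (with $a\ge 0$) by Stein complex interpolation, using the Gaussian heat-kernel upper bounds of \cite{LS,MS} to supply the imaginary-power endpoint.

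\textbf{Step 1 (the case $s=1$).} I would apply Hassell-Lin directly. For $a>-\lambda_n$ they prove that $\nabla P_a^{-1/2}$ is bounded on $L^r(\R^n)$ on a range of exponents controlled by the indicial roots $\tfrac12(n\pm\sqrt{(n-2)^2+4a})$, which yields $\|\nabla f\|_{L^r}\lesssim\|P_a^{1/2}f\|_{L^r}$. By duality the reverse transform $P_a^{1/2}(-\Delta)^{-1/2}$ is bounded on $L^r$ in the dual range. The intersection of the forward and dual ranges is exactly $(r_0,r_0')\cap(r_1',r_1)$, and the additional restriction to $r\in(n/(n-1),n)$ ensures that the homogeneous Riesz potentials lie in the Hardy-Littlewood-Sobolev range where $\|(-\Delta)^{1/2}f\|_{L^r}\sim\|\nabla f\|_{L^r}$. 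Combining both directions yields \eqref{equinorm1}.

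\textbf{Step 2 (the case $0\le s\le 1$, $a\ge 0$).} I would apply Stein's complex interpolation to the analytic family
\[
T_z=P_a^{z/2}(-\Delta)^{-z/2},\qquad 0\le\Re z\le 1.
\]
The endpoint $\Re z=1$ is supplied by Step 1 (absorbed with imaginary-power factors). The endpoint $\Re z=0$ requires the imaginary-power bound $\|P_a^{i\tau/2}\|_{L^r\to L^r}\lesssim e^{c|\tau|}$ for $1<r<\infty$. This is where $a\ge 0$ is essential: in that regime the heat kernel of $P_a$ enjoys the Gaussian upper bound
\[
|e^{-tP_a}(x,y)|\lesssim t^{-n/2}\exp(-c|x-y|^2/t),
\]
established in \cite{LS,MS}, from which a standard spectral-multiplier argument produces a bounded $H^\infty$-functional calculus on $L^r$ and hence the required imaginary-power estimate. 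Stein interpolation between the two endpoints gives $L^r$-boundedness of $T_s$ on the stated range of exponents; applying the same argument to $(-\Delta)^{z/2}P_a^{-z/2}$ yields the reverse inequality, completing \eqref{equinorm2}.

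\textbf{Main obstacle.} The principal technical difficulty is the $L^r$-boundedness of imaginary powers of $P_a$: because $a|x|^{-2}$ is critically singular, Mikhlin-type multiplier theorems do not apply directly, and the Gaussian heat-kernel upper bound one needs is known to fail when $a$ is sufficiently negative. This is precisely what forces the $a\ge 0$ restriction in \eqref{equinorm2}, and what determines the sharp range of $r$ in \eqref{equinorm1} through \cite{HL}. Carefully tracking this admissible range through duality, composition and interpolation, so that the intermediate intervals $(r_0'/(r_0'-s),\,r_1/s)\cap(1,n/s)$ emerge cleanly, is the principal bookkeeping task of the proof.
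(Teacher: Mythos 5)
Your proposal follows essentially the same route as the paper: the $s=1$ case rests on Hassell--Lin's $L^r$-boundedness of $\nabla P_a^{-1/2}$ plus a duality argument for the reverse bound, and the fractional case for $a\ge0$ is Stein--Weiss complex interpolation of the analytic families $P_a^{z}(-\Delta)^{-z}$ and $(-\Delta)^{z}P_a^{-z}$ against imaginary-power bounds on $L^r$ deduced (as in Sikora--Wright) from the Gaussian heat-kernel upper bound, exactly as the paper does. One small correction: the restriction $r\in(\tfrac n{n-1},n)$ in \eqref{equinorm1} is not there to guarantee $\|(-\Delta)^{1/2}f\|_{L^r}\sim\|\nabla f\|_{L^r}$ (the classical Riesz transform gives that for all $1<r<\infty$); it is needed because the duality step expands $\langle P_af,g\rangle$ into $\langle\nabla f,\nabla g\rangle+a\langle |x|^{-1}f,|x|^{-1}g\rangle$ and must control the potential term via the generalized Hardy inequality, which requires both $r$ and $r'$ to lie in $(1,n)$.
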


\begin{remark} This result generalizes the equivalent result of the Sobolev norms in \cite{BPSS}, which proved $\|f\|_{\dot H^s(\R^n)}\sim\|P_a^{s/2}f\|_{L^2(\R^n)}$
for $-1\leq s\leq 1$ and $a>-\la_n$.
\end{remark}
\begin{remark} From the classical Sobolev result, we know that \eqref{equinorm1} with $a=0$ holds for $1<r<\infty$.
However, it is easy to check that the interval $(r_0,r_1)\rightarrow (1,n)$ as $a\rightarrow0$. It was pointed out to the authors by Andrew Hassell
that the dependence on $a$ in the boundedness of Riesz transform is not continuous, which means that the equivalent norm result is strongly influenced by the inverse square potential even though for sufficiently small $|a|$.
\end{remark}
This proposition follows from

\begin{proposition}\label{pequiv} There exist constants $c_r$ and $C_r$ satisfying the following estimates:

$\bullet$ If $a>-\la_n$, we have for $r\in (r_0,r_1)$
\begin{equation}\label{Riesz}
\|\nabla f\|_{L^r}\leq C_r\| P_a^{\frac 12}f\|_{L^r};
\end{equation}
In addition, the reverse estimate holds for $r\in (r_0,r_1)$ and
$1<r,r'<n$
\begin{equation}\label{RRiesz}
\|P_a^{\frac 12}f\|_{L^{r'}}\leq c_r\|\nabla f\|_{L^{r'}}.
\end{equation}

$\bullet$ If $a\geq0$, we have for $0\leq s\leq 2$ and $r\in
(1,n/s)$
\begin{equation}\label{norm}
\|P_a^{\frac s2}f\|_{L^r}\leq C_r\||\nabla|^{s}f\|_{L^r}
\end{equation}
In addition, the reverse estimate holds for $0\leq s\leq 1$ and
$r\in(r_0'/(r_0'-s), r_1/s)$
\begin{equation}\label{rnorm}
\|P_a^{\frac s2}f\|_{L^r}\geq c_r\||\nabla|^{s}f\|_{L^r}
\end{equation}

\end{proposition}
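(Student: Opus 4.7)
The plan is to derive the four bounds from two external inputs: the sharp $L^r$-boundedness range $(r_0,r_1)$ of the Riesz transform $\nabla P_a^{-1/2}$ proved by Hassell-Lin \cite{HL}, and the two-sided Gaussian heat kernel bounds for $e^{-tP_a}$ of Liskevich-Sobol \cite{LS} and Milman-Semenov \cite{MS}. Both ingredients reflect the polynomial corrections appearing in the heat kernel near the origin, which are precisely what produce the endpoints $r_0,r_1$ in \eqref{r0r1}.

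Inequality \eqref{Riesz} is immediate: setting $g=P_a^{1/2}f$ in the Hassell-Lin bound $\|\nabla P_a^{-1/2}g\|_{L^r}\lesssim\|g\|_{L^r}$ yields $\|\nabla f\|_{L^r}\lesssim\|P_a^{1/2}f\|_{L^r}$ for $r\in(r_0,r_1)$. The reverse inequality \eqref{RRiesz} requires more work, since a naive duality of Hassell-Lin produces only the wrong-direction domination $\|P_a^{-1/2}k\|_{L^{r'}}\lesssim\|(-\Delta)^{-1/2}k\|_{L^{r'}}$. I would instead work at the kernel level: the two-sided Gaussian bound of \cite{LS,MS} for $e^{-tP_a}(x,y)$, combined with Duhamel applied to the resolvent identity $(P_a+\lambda)^{-1}-(-\Delta+\lambda)^{-1}=-a(P_a+\lambda)^{-1}|x|^{-2}(-\Delta+\lambda)^{-1}$ and an integral representation of $P_a^{1/2}$, gives pointwise kernel estimates and then $L^{r'}$-boundedness of $P_a^{1/2}(-\Delta)^{-1/2}$ via Hardy-Littlewood-Sobolev. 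This is equivalent to \eqref{RRiesz} once the standard Laplacian Riesz transform is used to identify $\|\nabla f\|_{L^{r'}}$ with $\|(-\Delta)^{1/2}f\|_{L^{r'}}$; the restriction $1<r,r'<n$ enters precisely here, to secure this identification and control kernel integrability.

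For \eqref{norm} with $a\ge 0$, the nonnegativity of $a|x|^{-2}$ together with Feynman-Kac (equivalently, the parabolic maximum principle) yields the pointwise domination $e^{-tP_a}(x,y)\le K_t(x,y)$, where $K_t$ denotes the free heat kernel. Integrating against $t^{s/2-1}$ via $P_a^{-s/2}=\Gamma(s/2)^{-1}\int_0^\infty t^{s/2-1}e^{-tP_a}\,dt$ produces the pointwise bound $P_a^{-s/2}(x,y)\le(-\Delta)^{-s/2}(x,y)$ for $0<s<n$. The Gaussian upper bound on $e^{-tP_a}$ also furnishes $L^r$-boundedness of the imaginary powers $P_a^{it}$ with at most sub-exponential growth in $|t|$. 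Stein's complex interpolation between these imaginary-power estimates and the pointwise kernel domination gives $P_a^{s/2}(-\Delta)^{-s/2}$ bounded on $L^r$, which is \eqref{norm}. For \eqref{rnorm} I would use the same Stein interpolation on the analytic family $P_a^{z/2}(-\Delta)^{-z/2}$, between the trivial $s=0$ case and the $s=1$ case \eqref{RRiesz}.

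The main obstacle is \eqref{RRiesz}: it is not a direct consequence of Hassell-Lin's Riesz bound, and genuinely needs the lower heat kernel estimate of \cite{LS,MS} to preclude cancellation between $P_a^{1/2}$ and $(-\Delta)^{1/2}$. Once \eqref{RRiesz} is in hand, \eqref{rnorm} follows by interpolation, and the $a\ge 0$ estimates \eqref{norm}-\eqref{rnorm} reduce essentially to the Feynman-Kac pointwise kernel comparison together with standard multiplier theory.
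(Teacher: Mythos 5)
Your treatment of \eqref{Riesz} matches the paper: it is exactly the Hassell--Lin Riesz transform bound applied to $g=P_a^{1/2}f$. The other three parts have genuine problems. For \eqref{RRiesz} you reject duality too quickly: the paper's proof \emph{is} a duality argument, but through the quadratic form rather than through the adjoint of the Riesz transform. Writing $\|P_a^{1/2}f\|_{L^{r'}}=\sup\big\{\langle P_a^{1/2}f,P_a^{1/2}g\rangle:\ \|P_a^{1/2}g\|_{L^r}\le1\big\}$ and using $\langle P_a^{1/2}f,P_a^{1/2}g\rangle=\langle P_af,g\rangle=\langle\nabla f,\nabla g\rangle+a\langle f/|x|,g/|x|\rangle$, one concludes by H\"older, the generalized Hardy inequality \eqref{hardy} (this is precisely where $1<r,r'<n$ enters), and \eqref{Riesz} applied to $g$. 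No lower heat kernel bound is used or needed, contrary to your claim. Your proposed resolvent/kernel route is not a proof as written: $P_a^{1/2}(-\Delta)^{-1/2}$ is an order-zero operator, so Hardy--Littlewood--Sobolev does not apply, and no kernel bounds for it are actually established by your sketch.

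For \eqref{norm} your endpoint at $\Re z=1$ is the wrong object. The pointwise domination $e^{-tP_a}(x,y)\le K_t(x,y)$ controls the \emph{negative} powers $P_a^{-s/2}$ and yields Sobolev-type bounds; it says nothing about the operator $P_a(-\Delta)^{-1}$ that must be bounded on $L^{p_1}$ to run Stein--Weiss interpolation for the \emph{positive} power $T_z=P_a^{z}(-\Delta)^{-z}$. The correct endpoint input is $\|P_af\|_{L^p}\le\|\Delta f\|_{L^p}+|a|\,\||x|^{-2}f\|_{L^p}\lesssim\|\Delta f\|_{L^p}$ for $1<p<n/2$, again via \eqref{hardy}; this is also what produces the restriction $r<n/s$, which your argument cannot generate. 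Finally, for \eqref{rnorm} you have the direction reversed: \eqref{rnorm} asserts boundedness of $(-\Delta)^{s/2}P_a^{-s/2}$, so the relevant analytic family is $\widetilde T_z=(-\Delta)^{z}P_a^{-z}$ and the endpoint at $\Re z=1/2$ is \eqref{Riesz} with $p_1\in(r_0,r_1)$ (which is what generates the range $r\in(r_0'/(r_0'-s),r_1/s)$), not \eqref{RRiesz}; interpolating $P_a^{z/2}(-\Delta)^{-z/2}$ against \eqref{RRiesz} would at best reprove a piece of \eqref{norm}.
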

For $a\geq0$, since the heat kernel $e^{-tP_a}$ satisfies the
Gaussian upper bounds, we can follow
D'Ancona-Fanelli-Vega-Visciglia's \cite{DFVV} argument, which are in
spirt of Sikora-Wright \cite{SW} proving the boundedness of
imaginary power operators and Stein-Weiss complex interpolation
theorem. When $a<0$, the Gaussian upper bound of the heat kernel
fails, we have to resort to the boundedness of the Riesz transform,
which was proved in Hassell-Lin \cite{HL}. That is why we have to
restrict $s=1$. However, this is enough for considering the
wellposedness and scattering theory in energy space $H^1$. We
believe that one could establish the equivalence of the Sobolev norm
on metric cone and perturbated by the inverse square potential,
which is a topic we plan to address in future articles.

\begin{proof}

We first consider \eqref{Riesz}, which is a consequence of the
boundedness of Riesz transform $\nabla P_a^{-1/2}$. The theorem of
Hassell-Lin \cite{HL} established $L^r$-boundness of Riesz transform
of Schr\"odinger operator with inverse square potential on a metric
cone. The result implies that if $r\in (r_0,r_1)$ where $r_0,r_1$
are defined in \eqref{r0r1}, then Riesz transform $\nabla
P_a^{-\frac12}$ is bounded on $L^r$.

 Next we use a duality argument
to show \eqref{RRiesz}. Since $P_a^{\frac12}C_0^\infty$ is dense in
$L^r$ (see \cite[Appendix]{Russ}), then
$$\|P_a^{\frac12} f\|_{L^{r'}}=\sup\Big\{\langle P_a^{\frac12}f,P_a^{\frac12}g\rangle: g\in C_0^\infty(\R^n),\|P_a^{\frac12}g\|_{L^r}\leq 1\Big\}.$$
Therefore by the definition of the square root of $P_a$, we see
\begin{equation*}
\begin{split}
\|P_a^{\frac12} f\|_{L^{r'}}&\leq |\langle
P_a^{\frac12}f,P_a^{\frac12}g\rangle|=|\langle P_af,g\rangle|\\&\leq
\|\nabla f\|_{L^{r'}}\|\nabla g\|_{L^{r}}+\left\|
f/{|x|}\right\|_{L^{r'}}\left\|g/{|x|}\right\|_{L^{r}}.
\end{split}
\end{equation*}
If $1<r,r'<n$, the Hardy inequality \eqref{hardy} (see below)
implies
\begin{equation}
\begin{split}
\|P_a^{\frac12} f\|_{L^{r'}}\leq C\|\nabla f\|_{L^{r'}}\|\nabla
g\|_{L^{r}}.
\end{split}
\end{equation}
By \eqref{Riesz}, hence we prove \eqref{RRiesz}.\vspace{0.2cm}

Next we prove \eqref{norm} and \eqref{rnorm}. We first recall
two-side estimates of the heat kernel associated to the operator
$P_a$, which were found independently by Liskevich-Sobol
\cite[Remarks at the end of Sec. 1]{LS} and Milman-Semenov
\cite[Theorem 1]{MS}.

\begin{lemma}[Heat kernel boundedness]\label{lem:heat} Assume $a>-\la_n$ and let
$H(t,x,y)$ be the kernel of the operator $e^{-tP_a}$. Then there
exist positive constants $C_1,C_2$ and $c_1,c_2$ such that for all
$t>0$ and all $x,y\in \R^n\setminus\{0\}$
\begin{equation}\label{heat}
\begin{split}
C_1&\varphi_\sigma(x,t)\varphi_\sigma(y,t)t^{-\frac
n2}\exp\left(-|x-y|^2/c_1t\right)\leq H(t,x,y)\\&\leq
C_2\varphi_\sigma(x,t)\varphi_\sigma(y,t)t^{-\frac
n2}\exp\left(-|x-y|^2/c_2t\right),
\end{split}
\end{equation}
where the weight function
\begin{equation}\label{weight}
\varphi_{\sigma}(x,t)=\begin{cases}\left(\frac{\sqrt
t}{|x|}\right)^{\sigma}\quad &\mathrm{if}~|x|\leq \sqrt{t},\\\quad
1\quad &\mathrm{if}~|x|\geq \sqrt{t}\\
\end{cases}
\end{equation}
and $\sigma=\sigma(a)=\frac12(n-2)-\frac12\sqrt{(n-2)^2+4a}$.
\end{lemma}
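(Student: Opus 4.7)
The plan is to derive the two-sided Gaussian bounds for $H(t,x,y)$ via the ground-state transform together with known heat-kernel estimates for weighted Laplacians. A direct computation yields $P_a(|x|^{-\al})=[a-\al(\al-n+2)]|x|^{-\al-2}$, so the equation $P_a h=0$ on $\R^n\setminus\{0\}$ is satisfied by $h(x):=|x|^{-\sigma}$, where $\sigma$ as in the statement is the smaller root of $\al^2-(n-2)\al-a=0$. The assumption $a>-\la_n$ guarantees that $\sigma$ is real and that $\sigma<(n-2)/2<n/2$, and $h$ plays the role of a positive generalized ground state for the Friedrichs realization of $P_a$.

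First, I would perform Doob's $h$-transform: set $K(t,x,y):=h(x)^{-1}H(t,x,y)h(y)^{-1}$. A routine computation using $P_a h=0$ identifies $K$ as the heat kernel of the symmetric weighted Laplacian
\begin{equation*}
L_\sigma u=-|x|^{2\sigma}\,\nabla\cdot\bigl(|x|^{-2\sigma}\nabla u\bigr)
\end{equation*}
on $L^2(\R^n,d\mu)$ with $d\mu:=|x|^{-2\sigma}dx$. Since $2\sigma<n$, the measure $d\mu$ is doubling on Euclidean balls, and direct integration gives
\begin{equation*}
\mu(B(x,r))\sim\max(|x|,r)^{-2\sigma}\,r^n.
\end{equation*}

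Second, I would establish a scale-invariant $L^2$ Poincar\'e inequality for $L_\sigma$ on balls. In the range $|\sigma|<n/2$ the weight $|x|^{-2\sigma}$ belongs to the Muckenhoupt class $A_2$ and the inequality is classical. Outside that range one splits a ball $B(x,r)$ into the sub-cases $|x|\ge 2r$ (where the weight is essentially constant on $B(x,r)$ so Poincar\'e reduces to the unweighted one) and $|x|<2r$ (where $B(x,r)\subset B(0,3r)$ and one invokes a radially weighted Poincar\'e--Hardy inequality on a ball centered at the origin). Together with doubling and a standard stochastic-completeness check for $L_\sigma$, the Grigor'yan--Saloff-Coste criterion yields the Li--Yau two-sided bounds
\begin{equation*}
\frac{C_1}{\mu(B(x,\sqrt t))}e^{-|x-y|^2/c_1t}\le K(t,x,y)\le \frac{C_2}{\mu(B(x,\sqrt t))}e^{-|x-y|^2/c_2t}.
\end{equation*}

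Finally, undoing the transform through $H(t,x,y)=|x|^{-\sigma}|y|^{-\sigma}K(t,x,y)$ and using the elementary identity $|x|^{-\sigma}\max(|x|,\sqrt t)^{\sigma}=\varphi_\sigma(x,t)$ (verified separately on each of the two regions defining $\varphi_\sigma$) converts these bounds into exactly the claimed form. The main technical obstacle is the Poincar\'e inequality in the non-$A_2$ regime; the alternative route actually pursued by Liskevich--Sobol and Milman--Semenov bypasses this by decomposing into spherical harmonics, writing the radial kernel at angular mode $k$ in terms of the modified Bessel function $I_{\nu_k}$ with $\nu_k=\sqrt{(k+(n-2)/2)^2+a}$, and resumming the series via Gegenbauer's addition formula with quantitative monotonicity estimates for $I_\nu$ in $\nu$. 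This second route has the virtue of delivering sharp constants and handling both signs of $a$ uniformly.
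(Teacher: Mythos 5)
The paper does not actually prove this lemma: it is quoted directly from Liskevich--Sobol \cite{LS} and Milman--Semenov \cite{MS}, so there is no internal argument to compare against. Your sketch is a reasonable reconstruction of how those references proceed, and it is essentially the ``desingularizing weights'' method of \cite{MS}: your computation $P_a(|x|^{-\sigma})=0$ with $\sigma$ the smaller root of $\alpha^2-(n-2)\alpha-a=0$ is correct, the $h$-transform does convert $H(t,x,y)$ into the heat kernel $K$ of $L_\sigma=-|x|^{2\sigma}\nabla\cdot(|x|^{-2\sigma}\nabla\cdot)$ on $L^2(|x|^{-2\sigma}dx)$, the volume estimate $\mu(B(x,r))\sim\max(|x|,r)^{-2\sigma}r^n$ is right since $2\sigma<n-2<n$, and undoing the transform via $|x|^{-\sigma}\max(|x|,\sqrt t\,)^{\sigma}=\varphi_\sigma(x,t)$ recovers exactly \eqref{heat} (the one-ball versus two-ball normalization in the Li--Yau bound is harmless because doubling lets you trade $\mu(B(y,\sqrt t))$ for $\mu(B(x,\sqrt t))$ at the cost of adjusting $c_1,c_2$). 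Three points in your outline deserve to be made precise rather than waved at. First, the scale-invariant $L^2$ Poincar\'e inequality for the weight $|x|^{-2\sigma}$ outside the $A_2$ range (i.e.\ when $\sigma\le -n/2$, which occurs for large $a>0$) is not something to improvise by splitting balls; it is covered by the known fact that every power weight $|x|^{\gamma}$ with $\gamma>-n$ is admissible (doubling plus Poincar\'e), and you should cite or prove that. Second, what you need is not ``stochastic completeness'' but the Sturm/Grigor'yan--Saloff-Coste equivalence of the parabolic Harnack inequality with doubling and Poincar\'e for the strongly local regular Dirichlet form $\int|\nabla u|^2|x|^{-2\sigma}dx$; the singular point at the origin must be checked to be negligible for this form (it has zero capacity since $2\sigma<n$). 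Third, in the range $-\lambda_n<a<1-\lambda_n$ the self-adjoint extension of $P_a$ is not unique, and the paper works with the Friedrichs extension; you must verify that the Dirichlet-form operator obtained after the transform corresponds precisely to that extension, which is exactly what singles out the smaller root $\sigma$ (the larger root would produce a different realization and different boundary behavior at $x=0$). With those three items supplied, your route gives the stated two-sided bound; the spherical-harmonics/Bessel-function route you mention is the alternative taken in parts of the cited literature and yields the same result.
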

\begin{remark} We notice that $\sigma(a)\leq 0$ for $a\geq0$ and
$0<\sigma(a)<(n-2)/2$ for $a\in (-\la_n,0)$.
\end{remark}
We need a theorem of Sikora-Wright \cite{SW} that established a weak
type estimate for imaginary powers of self-adjoint operator, defined
by spectral theory. The result implies that if the heat kernel
$H(t,x,y)$ associated with the operator $\mathrm{H}$ satisfies that
$$H(t,x,y)\lesssim t^{-\frac
n2}\exp\left(-|x-y|^2/c_2t\right),
$$ then for all $y\in \R$ the imaginary powers $\mathrm{H}^{iy}$ is
weak-$(1,1)$. By Lemma \ref{lem:heat} and the well known Gaussian
upper for heat kernel $e^{-t\Delta}$, the operators $P_a^{iy}$
$(a\geq0)$ and $(-\Delta)^{-iy}$ satisfy weak-$(1,1)$ type estimate
of $O(1+|y|)^{n/2}$. On the other hand, the operators $P_a^{iy}$ and
$(-\Delta)^{-iy}$ are obviously bounded on $L^2$ by the spectral
theory on Hilbert space. Hence the operators $P_a^{iy}$ and
$(-\Delta)^{-iy}$ are bounded on $L^r$ for all $1<r<\infty$.

Now we define the analytic family of operators for $z\in\C$
\begin{equation}
T_z=P_a^z(-\Delta)^{-z}
\end{equation}
where $P_a^z=\int_0^\infty\lambda^z dE_{\sqrt{P_a}}(\lambda)$ and
$(-\Delta)^z$ is defined by the Fourier transform. Writing $z=x+iy$
for $x\in [0,1]$, we have
\begin{equation*} T_z=P_a^{iy}P_a^{x}(-\Delta)^{-x}(-\Delta)^{-iy},
\quad y\in\R,~ x\in [0,1].
\end{equation*}

When $\Re z=0$, then we have for all $1<p_0<\infty$
\begin{equation}\label{x0}
\|T_z\|_{L^{p_0}\rightarrow L^{p_0}}=\|T_{iy}\|_{L^{p_0}\rightarrow
L^{p_0}}\leq C(1+|y|)^{n(1-\frac2{p_0})}.
\end{equation}
Notice $P_af=-\Delta
f+a|x|^{-2}f$, it follows from the Hardy inequality \eqref {hardy}
that
\begin{equation*}
\left(\int_{\R^n}|P_af|^{p}dx\right)^{\frac1{p}}\leq
C\left(\int_{\R^n}|\Delta f|^{p}dx\right)^{\frac1{p}},\quad
1<{p}<\frac n2.
\end{equation*}
Therefore for $\Re z=1$, we have by the $L^p$-boundedness of
$P_a^{iy}$ and $(-\Delta)^{-iy}$
\begin{equation}\label{x1}
\|T_{1+iy}\|_{L^{p_1}\rightarrow
L^{p_1}}\leq\|P_a(-\Delta)^{-1}\|_{L^{p_1}\rightarrow L^{p_1}}\leq
C(1+|y|)^{n(1-\frac2{p_1})},  1<{p_1}<\frac n2.
\end{equation}
Applying complex interpolation to \eqref{x0} and \eqref{x1}, we
obtain for real number $\sigma\in[0,1]$
\begin{equation}
\|T_{\sigma}\|_{L^r\rightarrow L^r}\leq C,
\end{equation}
where $1/r=(1-\sigma)/p_0+\sigma/p_1$ with $1<p_0<\infty$ and
$1<p_1<n/2$. This gives $r\in(1,n/(2\sigma))$, hence $1<r<n/s$ which
proves \eqref{norm}.\vspace{0.1cm}

Finally we prove \eqref{rnorm}. We similarly define the analytic
family of operators for $z\in\C$
\begin{equation}
\widetilde{T}_z=(-\Delta)^{z}P_a^{-z}.
\end{equation}
Writing $z=x+iy$ for $x\in [0,1/2]$, we have
\begin{equation} \widetilde{T}_z=(-\Delta)^{iy}(-\Delta)^{x}P_a^{-x}P_a^{-iy},
\quad y\in\R,~ x\in [0,1/2].
\end{equation}

As before the operators $P_a^{-iy}$ and $(-\Delta)^{iy}$ are bounded
on $L^r$ for all $1<r<\infty$. On the other hand, we can use the
dual argument  as above and boundedness of classical Riesz transform
$\nabla(-\Delta)^{-\frac12}$ to show
$$\|(-\Delta)^{\frac12}f\|_{L^r}\leq C\|\nabla f\|_{L^r},\quad 1<r<\infty.$$ By \eqref{Riesz}, we hence have
\begin{equation}\label{hx}
\|\widetilde{T}_z\|_{L^{p_1}\rightarrow L^{p_1}}\leq C,\quad
\text{for}~p_1\in(r_0,r_1), \quad\Re z=1/2.
\end{equation}

When $\Re z=0$, then we have for all $1<p_0<\infty$
\begin{equation}\label{hx0}
\|\widetilde{T}_z\|_{L^{p_0}\rightarrow
L^{p_0}}=\|T_{iy}\|_{L^{p_0}\rightarrow L^{p_0}}\leq
C(1+|y|)^{n(1-\frac2{p_0})}.
\end{equation} By the Stein-Weiss interpolation, we obtain
for real number $\sigma\in[0,1/2]$
\begin{equation}
\|\widetilde{T}_{\sigma}\|_{L^r\rightarrow L^r}\leq C,
\end{equation}
where $1/r=(1-2\sigma)/p_0+2\sigma/{p_1}$ with $1<p_0<\infty$, and
$p_1\in(r_0,r_1)$. This implies $r\in(r_0'/(r_0'-2\sigma),
r_1/(2\sigma))$. Note $\sigma=s/2$, we prove \eqref{rnorm}.

\end{proof}




\section{Proof of Theorem \ref{thm}}
In this section, we prove Theorem \ref{thm}. The key points are the Strichartz estimate, Leibniz rule obtained by the equivalence Sobolev norm
and the Morawetz type estimate.

\subsection{Global well-posedness theory}
By the mass and energy
conservation, the global well-posedness follows from

\begin{proposition}[Local well-posedness theory]\label{localwell}
Let $n\geq 3$ and $1+\frac2{n-2}\leq
p<1+\frac4{n-2}$. Assume that  $a>-\frac4{(p+1)^2}\la_n$ and $u_0\in H^1(\R^n)$. Then
there exists $T=T(\|u_0\|_{H^1})>0$ such that the equation
\eqref{Eq3} with initial data $u_0$ has a unique solution $u$ with
\begin{equation}\label{small}
u\in C(I; H^1(\R^n))\cap L_t^{q}(I; H^1_r(\R^n)),\quad I=[0,T),
\end{equation}
where the pair $(q,r)\in\Lambda_0$ satisfies
\begin{equation}
(q,r)=\begin{cases}
\left(\tfrac{4(p+1)}{(n-2)(p-1)},\tfrac{n(p+1)}{n+p-1}\right),\quad
\text{if}\quad
a\geq0;\\
\left(q,p\left(\frac1{(2n/(n+2))_+}+\frac{p-1}n\right)^{-1}\right),\quad \text{if}\quad\min\{1-\lambda_n,0\}\leq a<0;\\
\left(q,p\left(\frac1{(r_1')_+}+\frac{p-1}n\right)^{-1}\right),\quad\text{if}\quad
-\frac4{(p+1)^2}\lambda_n<a<\min\{1-\lambda_n,0\}.
\end{cases}
\end{equation}
Here $r_1$ is defined in \eqref{r0r1}.
\end{proposition}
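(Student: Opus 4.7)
The plan is to run a standard contraction mapping argument on the Duhamel formulation
\[
\Phi(u)(t) = e^{-itP_a} u_0 - i \int_0^t e^{-i(t-s)P_a} \bigl(|u|^{p-1}u\bigr)(s)\, ds
\]
in a closed ball of the resolution space
\[
X_T = \Bigl\{ u \in C([0,T];H^1(\R^n)) \cap L^q_t([0,T]; H^1_r(\R^n)) : \|u\|_{X_T} \le R \Bigr\},
\]
with norm $\|u\|_{X_T} = \|u\|_{L^\infty_t H^1_x} + \|u\|_{L^q_t H^1_r}$, where $R$ will be a fixed multiple of $\|u_0\|_{H^1}$ and $T$ is chosen small depending only on $\|u_0\|_{H^1}$.

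First I would apply the Strichartz estimates \eqref{lstes}--\eqref{instr} of Propositions \ref{lse}--\ref{prop1} with the pair $(q,r)$ supplied by the statement (dualised against itself, safely away from the forbidden endpoint $(2,\tfrac{2n}{n-2})$), which reduces matters to controlling
\[
\bigl\|P_a^{1/2}(|u|^{p-1}u)\bigr\|_{L^{q'}_t L^{r'}_x}.
\]
Next I would invoke the Sobolev norm equivalence of Proposition \ref{equiv}: applied on the Strichartz side it gives $\|P_a^{1/2} u\|_{L^r_x} \sim \|\nabla u\|_{L^r_x}$, and applied on the nonlinear side it replaces $\|P_a^{1/2}(|u|^{p-1}u)\|_{L^{r'}_x}$ by $\|\nabla(|u|^{p-1}u)\|_{L^{r'}_x}$. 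In this way the whole argument is reduced to estimates involving only the standard Euclidean gradient, where the classical fractional Leibniz rule is available.

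With $\nabla$ in place, H\"older's inequality, the fractional Leibniz rule, and the Sobolev embedding $\dot H^1_r(\R^n) \hookrightarrow L^{nr/(n-r)}(\R^n)$ yield
\[
\bigl\|\nabla(|u|^{p-1}u)\bigr\|_{L^{r'}_x} \lesssim \|u\|_{L^{nr/(n-r)}_x}^{p-1} \|\nabla u\|_{L^r_x} \lesssim \|\nabla u\|_{L^r_x}^{p},
\]
and a further H\"older step in time produces a positive power $T^{\theta}$ with $\theta = 1 - (n-2)(p-1)/4 > 0$. This is exactly the $H^1$-subcritical gap $p < 1 + 4/(n-2)$, and it lets the Banach fixed-point theorem apply on the closed ball of radius $R \sim \|u_0\|_{H^1}$ for $T$ sufficiently small. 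Uniqueness and continuous dependence follow from the same nonlinear estimate applied to the difference $\Phi(u) - \Phi(v)$.

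The main obstacle is choosing the admissible pair $(q,r)$ compatibly with the Sobolev norm equivalence, which is what drives the case distinction in the statement. When $a \ge 0$, Proposition \ref{equiv} is valid on the full classical Sobolev range, so the standard $H^1$-subcritical pair $(q,r) = \bigl(\tfrac{4(p+1)}{(n-2)(p-1)}, \tfrac{n(p+1)}{n+p-1}\bigr)$ sits automatically inside the equivalence window and also inside $(1,n)$. When $a < 0$ the interval $(r_0, r_1) \cap (\tfrac{n}{n-1}, n)$ from \eqref{equinorm1} strictly shrinks, and one must perturb $r$ toward the allowed edge: the second choice in the statement nudges $r$ off the boundary $2n/(n+2)$, and the third choice uses the sharper endpoint $r_1'$ dictated by \eqref{r0r1} when $a$ is close to $-\lambda_n$. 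The technical heart of the proof is checking that the assumption $a > -\tfrac{4}{(p+1)^2}\lambda_n$ is precisely what keeps such an admissible $(q,r)$ inside the equivalence window, while simultaneously leaving room for the Sobolev embedding $\dot H^1_r \hookrightarrow L^{nr/(n-r)}$ used in the nonlinear estimate above.
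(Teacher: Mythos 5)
Your overall strategy -- Duhamel formula plus Strichartz, trading $P_a^{1/2}$ for $\nabla$ via the Sobolev norm equivalence so that the classical Leibniz rule and the embedding $\dot H^1_r\hookrightarrow L^{nr/(n-r)}$ apply, and extracting the factor $T^{\theta}$ with $\theta=1-\tfrac{(n-2)(p-1)}{4}>0$ by H\"older in time -- is exactly the paper's. Two points, however, need repair. The first concerns the contraction metric. For $n\geq5$ the admissible range contains exponents $p<2$, and then $\nabla\bigl(|u|^{p-1}u-|v|^{p-1}v\bigr)$ cannot be controlled by a norm of $u-v$ involving one derivative, so Banach's theorem cannot be run in the full $X_T$ norm as you propose. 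The paper runs the contraction on the ball $B$ (bounded in $C_tH^1\cap L^q_tH^1_r$) equipped with the \emph{weaker} metric $d(u,v)=\|u-v\|_{L^q_tL^r_x}$, for which the difference estimate only requires the pointwise bound $\bigl||u|^{p-1}u-|v|^{p-1}v\bigr|\lesssim(|u|^{p-1}+|v|^{p-1})|u-v|$ and no Leibniz rule; completeness of $(B,d)$ is standard. You should do the same.

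The second point is the pairing for $a<0$. ``Dualising $(q,r)$ against itself'' is correct only when $a\geq0$, where indeed $1/r'=p/r-(p-1)/n$ for the stated pair. For $a<0$ the paper places the nonlinearity in $L_t^{\tilde q'}L_x^{\tilde r'}$ for a \emph{different} admissible pair $(\tilde q,\tilde r)$, with $\tilde r'$ chosen as $(2n/(n+2))_+$ or $(r_1')_+$ so that both $\tilde r'$ and $r$ lie inside the window where Propositions \ref{equiv} and \ref{pequiv} hold; the exponent $r$ in the statement is then \emph{determined} from $\tilde r'$ by the H\"older relation $1/\tilde r'=p/r-(p-1)/n$. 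In other words it is the dual exponent that is perturbed off the boundary, not $r$, and the asymmetric inhomogeneous estimate \eqref{instr} is what makes this legitimate. The hypothesis on $a$ enters exactly where you anticipated, in verifying $r\in(r_0,r_1)$ with $r_0,r_1$ as in \eqref{r0r1}; with these two corrections your argument matches the paper's.
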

\begin{remark}\label{re:local}One can release the restriction $p\geq1+\frac2{n-2}$ to $p>1$ if $a\geq0$. If $a<0$, one can also
improve the range of $p$ which depends on $a$. We do not give the detail since this result is enough for showing the scattering.
\end{remark}
\begin{proof}
We follow the standard Banach fixed point argument to prove this
result. To this end, we consider the map
\begin{equation}\label{inte3}
\Phi(u(t))=e^{itP_a}u_0-i\int_{0}^te^{i(t-s)P_a}(|u|^{p-1}u(s))ds
\end{equation}
on the complete metric space $B$
\begin{align*}
B:=\big\{&u\in Y(I)\triangleq C_t(I; H^{1})\cap L_t^{q}(I;
H^1_r):\ \|u\|_{Y(I)}\leq2CC_1\|u_0\|_{H^1}\big\}
\end{align*}
with the metric
$d(u,v)=\big\|u-v\big\|_{L_t^{q}L_x^{r}(I\times\R^n)}$. We need to
prove that the operator $\Phi$ defined by $(\ref{inte3})$
is well-defined on $B$ and is a contraction map under the metric $d$
for $I$.\vspace{0.1cm}

To see this, we first consider the case $a\geq0$. Therefore we have by Proposition \ref{equiv} for $n\geq3$
\begin{equation}\label{eqnorm}
\|\nabla f\|_{L^r}\simeq C_r\| P_a^{\frac 12}f\|_{L^r},\quad
\forall~ r\in(1,n).
\end{equation}

Let
$(q,r)=\big(\tfrac{4(p+1)}{(n-2)(p-1)},\tfrac{n(p+1)}{n+p-1}\big)$.
It is easy to verify that $(q,r)\in\Lambda_0$ such that
$r,~{r}'\in(1,n)$. Then we have by Strichartz estimate and
\eqref{eqnorm}
\begin{align*}
\big\|\Phi(u)\big\|_{Y(I)}\leq&C_1\big\|\langle
P_a^{1/2}\rangle\Phi(u)\big\|_{
L_t^q(I;
L_x^r)}\\
\leq& CC_1\|u_0\|_{H^1}+CC_1\big\|\langle P_a^{1/2}\rangle
(|u|^{p-1}u) \big\|_{L_t^{{q}'}L_x^{{r}'}(I\times\R^n)
}\\
\leq&CC_1\|u_0\|_{H^1}+CC_1C_2\big\|\langle\nabla\rangle
(|u|^{p-1}u)\big\|_{L_t^{{q}'}L_x^{{r}'}}.
\end{align*}
Choose $\theta=1-\frac{(p-1)(n-2)}{4}$, then we have by Leibniz rule, H\"older's inequality and Sobolev inequality
\begin{align*}
\big\|\Phi(u)\big\|_{Y(I)}
\leq&CC_1\|u_0\|_{H^1}+CC_1C_2T^{\theta}\|u\|_{L_t^{q}(I;
L^{\frac{nr}{n-r}})}^{p-1}\|u\|_{L_t^{q}(I;
H_r^{1})}.
\end{align*}

Note $\theta>0$ for $p\in[1+\frac 4n,1+\frac{4}{n-2})$ and $\|u\|_{Y(I)}\leq2CC_1\|u_0\|_{H^1}$ if $u\in B$, we see
that for $u\in B$,
\begin{align*}
\big\|\Phi(u)\big\|_{Y(I)}\leq
CC_1\|u_0\|_{H^1}+CC_1C_2T^\theta(2CC_1\|u_0\|_{H^1})^p.
\end{align*}
Taking $T$ sufficiently small such that
$$C_2T^\theta(2CC_1\|u_0\|_{H^1})^p<\|u_0\|_{H^1},$$
we have $\Phi(u)\in B$ for $u\in B$. On the other hand, by the same argument as before, we have for $u,
v\in B$,
\begin{align*}
d\big(\Phi(u),\Phi(v)\big)
\leq&CT^\theta\big(\|u\|_{Y(I)}^{p-1}+\|v\|_{Y(I)}^{p-1}\big)d(u,v).
\end{align*}
Thus we derive by taking $T$ small enough
\begin{equation*}
d\big(\Phi(u),\Phi(v)\big)\leq\tfrac{1}{2}d(u,v).
\end{equation*}

Next we consider the case $a\in(-\frac{4p}{(p+1)^2}\la_n,0)$. In this case we can choose $(q,r)\in \Lambda_0$ such that
\begin{equation*}
\left(\tfrac1q,\tfrac1r\right)=\left(\tfrac1q,\tfrac1p\left(\tfrac1{\widetilde{r}'}+\tfrac{p-1}n\right)\right),
\end{equation*}
where we denote $\widetilde{r}'$ to be
\begin{equation}\label{ryipdy}
\widetilde{r}'=\begin{cases}\left(\tfrac{2n}{n+2}\right)_+,\quad \text{if}\quad
\min\{1-\lambda_n,0\}\leq a<0;\\
(r_1')_+,\quad \text{if} \quad -\frac{4p}{(p+1)^2}\lambda_n<a<\min\{1-\lambda_n,0\}.
\end{cases}
\end{equation}

 Since $a<0$, one has
$1/{r_1'}=({n+\sqrt{(n-2)^2+4a}})/(2n)$. And so we can verify $r\in (r_0,r_1)$ by $a>-{4p\la_n}/{(p+1)^2}$. Hence we
have by Proposition \ref{equiv}
\begin{align*}
\big\|\Phi(u)\big\|_{Y(I)}\leq&C_1\big\|\langle
P_a^{1/2}\rangle\Phi(u)\big\|_{L_t^\infty(I; L_x^2)\cap
L_t^{q}(I;L_x^{r})}.
\end{align*}

Since the operator $P_a^{\frac12}$ commutates with the Schr\"odinger
propagator $e^{itP_a}$, we have by \eqref{instr} and \eqref{Riesz}
\begin{align*}
\big\|\Phi(u)\big\|_{Y(I)}
 \leq& CC_1\|u_0\|_{H^1}+CC_1\big\|\langle
P_a^{1/2}\rangle (|u|^{p-1}u)
\big\|_{L_t^{\widetilde{q}'}L_x^{\widetilde{r}'}(I\times\R^n)},
\end{align*}
where $(\widetilde{q},\widetilde{r})\in \Lambda_0$. Note that
$\widetilde{r}'\in(r_1',r_0')\cap(\frac{n}{n-1},n)\cap (\frac{2n}{n+2},2)$ when $a<0$. By
Proposition \ref{pequiv} and Leibniz rule for the operator $\langle
\nabla\rangle$, we obtain that for $\theta$ as above
\begin{align*}
\big\|\Phi(u)\big\|_{Y(I)}
 \leq& CC_1\|u_0\|_{H^1}+CC_1\big\|\langle
\nabla\rangle (|u|^{p-1}u)
\big\|_{L_t^{\widetilde{q}'}L_x^{\widetilde{r}'}(I\times\R^n)}\\
\leq&
CC_1\|u_0\|_{H^1}+CC_1T^\theta\|u\|^{p-1}_{L^q_t(I;L^{\frac{nr}{n-r}}_x(\R^n))}\|u\|_{L^q_t(I;H^1_r(\R^n))}
\\
\leq&
CC_1\|u_0\|_{H^1}+CC_1T^\theta\|u\|^p_{L^q_t(I;H^1_r(\R^n))},
\end{align*}
where $\frac1{\widetilde{q}'}=\theta+\frac p q$ and
$\frac1{\widetilde{r}'}=\frac {(n-r)(p-1)}{nr}+\frac1r$. For $u\in
B$, one must has $\|u\|_{Y(I)}\leq2CC_1\|u_0\|_{H^1}$, hence we have
\begin{align*}
\big\|\Phi(u)\big\|_{Y(I)}\leq
CC_1\|u_0\|_{H^1}+CC_1C_2T^{\theta}(2CC_1\|u_0\|_{H^1})^p.
\end{align*}
We take $T$ sufficiently small such that
$$C_2T^\theta(2CC_1\|u_0\|_{H^1})^p<\|u_0\|_{H^1},$$
hence $\Phi(u)\in B$ for $u\in B$. Similarly, we have
\begin{equation*}
d\big(\Phi(u),\Phi(v)\big)\leq\tfrac{1}{2}d(u,v).
\end{equation*}

The standard fixed point argument gives a unique solution $u$ of
\eqref{Eq3} on $I\times\R^n$ which satisfies the bound
\eqref{small}.\vspace{0.1cm}

\end{proof}

\begin{lemma}[The boundedness of kinetic energy] For $a>-\la_n$,  there exists $c=c(n,a)>0$ such that
\begin{equation}\label{kener}
\|u(t)\|_{\dot H^1}^2<cE(u(t)).
\end{equation}

\end{lemma}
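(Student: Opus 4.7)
The statement splits naturally according to the sign of $a$, and both cases come out of the sharp Hardy inequality on $\R^n$,
\begin{equation*}
\lambda_n \int_{\R^n} \frac{|u(t,x)|^2}{|x|^2}\,dx \leq \int_{\R^n} |\nabla u(t,x)|^2\,dx, \qquad \lambda_n = \tfrac{(n-2)^2}{4},
\end{equation*}
which holds for all $u \in \dot H^1(\R^n)$ and is exactly the $s=1$, $p=2$ case of the generalized Hardy inequality proved in Section~2. The plan is to insert this inequality into the definition of $E(u)$ in \eqref{energy} and to absorb the (possibly negative) potential term into the kinetic term.

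First I would dispose of the easy case $a \geq 0$: then all three summands in $E(u)$ are nonnegative, so $\tfrac12\|\nabla u(t)\|_{L^2}^2 \leq E(u(t))$ and the bound \eqref{kener} holds with $c = 2$, independent of $a$.

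For the remaining regime $-\lambda_n < a < 0$, the potential contribution $\tfrac{a}{2}\int|u|^2/|x|^2$ in $E(u)$ is nonpositive. Using the Hardy inequality above to estimate this term from below by $-\tfrac{|a|}{2\lambda_n}\|\nabla u\|_{L^2}^2$, and discarding the nonnegative $L^{p+1}$ term, I would obtain
\begin{equation*}
E(u(t)) \geq \tfrac12 \left(1 + \tfrac{a}{\lambda_n}\right) \|\nabla u(t)\|_{L^2}^2 + \tfrac{1}{p+1}\|u(t)\|_{L^{p+1}}^{p+1}.
\end{equation*}
Since $a > -\lambda_n$ by assumption, the coefficient $1 + a/\lambda_n$ is strictly positive, and rearranging gives \eqref{kener} with the explicit constant $c = 2\lambda_n/(\lambda_n + a)$, which depends only on $n$ and $a$.

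There is no real obstacle here; the only subtle point is the positivity of the constant, which is exactly the condition $a > -\lambda_n$ built into the hypothesis (and which is also what makes $P_a$ a nonnegative self-adjoint operator admitting the Friedrichs extension discussed in the introduction). As a sanity check, the same bound can be derived from the Sobolev norm equivalence $\|u\|_{\dot H^1} \simeq \|P_a^{1/2} u\|_{L^2}$ recalled at the start of Section~4 together with $\|P_a^{1/2} u\|_{L^2}^2 = \langle P_a u,u\rangle \leq 2E(u)$, but the Hardy-based proof gives a sharper and fully explicit constant.
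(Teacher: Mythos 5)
Your proof is correct and is essentially the paper's own argument: both insert the sharp Hardy inequality $\lambda_n\int |u|^2|x|^{-2}\,dx \leq \|\nabla u\|_{L^2}^2$ into the energy, discard the nonnegative $L^{p+1}$ term, and conclude from $a>-\lambda_n$ that the coefficient $\min\{1,\,1+a/\lambda_n\}$ of the kinetic term is strictly positive (your two cases are just the two branches of this minimum). One small caveat: the generalized Hardy inequality of Section~2 only yields an unspecified constant $C$, not the sharp constant $\lambda_n$ that your argument (and the paper's) actually needs, so the inequality should be attributed to the classical sharp Hardy inequality rather than to that lemma.
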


\begin{proof}
We recall the sharp Hardy's inequality that for $n\geq3$
\begin{equation}\label{shardy}
\int_{\R^n}\tfrac{|u(x)|^2}{|x|^2}dx\leq\tfrac4{(n-2)^2}\int_{\R^n}|\nabla u|^2dx.
\end{equation}
Thus,
\begin{align*}
E(u)=&\tfrac12\int|\nabla
u(t)|^2+\tfrac{a}2\int\tfrac{|u(t)|^2}{|x|^2}+\tfrac1{p+1}\int|u(t)|^{p+1}\\
\geq&\tfrac12\min\left\{1,1+\tfrac{4a}{(n-2)^2}\right\}\int|\nabla u(t)|^2.
\end{align*}
Note that $a>-\la_n$, this implies \eqref{kener}.
\end{proof}

By using Proposition \ref{localwell}, mass and energy conservations  and this lemma, we conclude the proof of global well-posed result of
Theorem \ref{thm}.

\vskip 0.2in

\subsection {Scattering theory} Now we use the global
interaction Morawetz estimate \eqref{interac2}
\begin{equation}\label{rMorawetz}
\big\||\nabla|^{\frac{3-n}4}u\big\|_{L_t^4(\R;L_x^4(\R^n))}^2\leq
C\|u_0\|_{L^2}\sup_{t\in \R}\|u(t)\|_{\dot H^{\frac12}},
\end{equation}
to prove the scattering theory part of Theorem \ref{thm}. Since the construction of the wave operator is
standard, we only show the asymptotic completeness.

Let $u$ be a global solution to \eqref{Eq3}. Using \eqref{kener} and
\eqref{rMorawetz}, we have by interpolation
\begin{equation}\label{vnorm} \|u\|_{L_t^{n+1}L_x^\frac{2(n+1)}{n-1}(\R\times\R^n)}\leq C(E,M),\end{equation}
where the constant $C$ depends on the energy $E$ and mass $M$. Let $\eta>0$ be a small constant to be chosen later and split
$\R$ into $L=L(\|u_0\|_{H^1})$ finite subintervals $I_j=[t_j,t_{j+1}]$ such
that
\begin{equation}\label{equ4.16}
\|u\|_{L_t^{n+1}L_x^\frac{2(n+1)}{n-1}(I_j\times\R^n)}\leq\eta.
\end{equation}
We first consider the case $a\geq0$. Define
$$\big\|\langle\nabla\rangle u\big\|_{S^0(I)}:=\sup_{(q,r)\in\Lambda_0:r\in[2,\min\{n_-,(\frac{2n}{n-2})_-\}]}\big\|\langle\nabla\rangle u\big\|_{L_t^qL_x^r(I\times\R^n)}.$$
Using the Strichartz estimate and \eqref{eqnorm} , we obtain
\begin{align}\label{star}
\big\|\langle\nabla\rangle
u\big\|_{S^0(I_j)}\lesssim&\|u(t_j)\|_{H^1}+\big\|\langle\nabla\rangle(|u|^{p-1}u)
\big\|_{L_t^2L_x^\frac{2n}{n+2}(I_j\times\R^n)}.
\end{align}
Let $\epsilon>0$ to be determined later, and
$r_\epsilon=\frac{2n}{n-(4/(2+\epsilon))}$. On the other hand, we
use the Leibniz rule and H\"older's inequality to obtain
\begin{align*}
\big\|\langle\nabla\rangle(|u|^{p-1}u)
\big\|_{L_t^2L_x^\frac{2n}{n+2}}\lesssim&\big\|\langle\nabla\rangle
u\big\|_{L_t^{2+\epsilon}(I_j;L_x^{r_\epsilon})}\|u\|^{p-1}_{L_t^{\frac{2(p-1)(2+\epsilon)}{\epsilon}}L_x^\frac{n(p-1)(2+\epsilon)}{4+\epsilon}}.\end{align*}

When $n\geq4$, we can choose $\epsilon>0$ to be small enough such
that ${2(p-1)(2+\epsilon)}/{\epsilon}>n+1$ and $2\leq
\frac{n(p-1)(2+\epsilon)}{4+\epsilon}<\tfrac{2n}{n-2}$ for all
$p\in(1+\frac4n,1+\frac{4}{n-2})$. Therefore we use interpolation to
obtain
\begin{align*}
\|u\|_{L_t^{\frac{2(p-1)(2+\epsilon)}{\epsilon}}L_x^\frac{n(p-1)(2+\epsilon)}{4+\epsilon}}\leq C\|u\|^\alpha_{L_t^{n+1}L_x^\frac{2(n+1)}{n-1}(I_j\times\R^n)}\|u\|^\beta_{L_t^{\infty}L_x^\frac{2n}{n-2}(I_j\times\R^n)}\|u\|^\gamma_{L_t^{\infty}L_x^2(I_j\times\R^n)},\end{align*}
where $\alpha>0,\beta,\gamma\geq 0$ satisfy $\alpha+\beta+\gamma=1$ and
\begin{align*}
\begin{cases}
\frac{\epsilon}{2(p-1)(2+\epsilon)}&=\frac{\alpha}{n+1}+\frac{\beta}{\infty}+\frac{\gamma}{\infty},\\
\frac{4+\epsilon}{n(p-1)(2+\epsilon)}&=\frac{(n-1)\alpha}{2(n+1)}
+\frac{(n-2)\beta}{2n}+\frac{\gamma}2.
\end{cases}
\end{align*}
It is easy to verify these requirements for
$p\in(1+\frac4n,1+\frac{4}{n-2})$. Since $r_\epsilon\in[2,n_-]$ and
$r_\epsilon<\tfrac{2n}{n-2}$ for $\epsilon>0$ and $n\geq4$, we have
\begin{align*}
\big\|\langle\nabla\rangle(|u|^{p-1}u)
\big\|_{L_t^2L_x^\frac{2n}{n+2}}\lesssim&\big\|\langle\nabla\rangle
u\big\|_{L_t^{2+\epsilon}(I_j;L_x^{r_\epsilon})}\|u\|^{\alpha(p-1)}_{L_t^{n+1}L_x^\frac{2(n+1)}{n-1}(\R\times\R^n)}
\|u\|^{(\beta+\gamma)(p-1)}_{L_t^\infty H^1_x(I_j\times\R^n)}\\\leq&
C\eta^{\alpha(p-1)}\big\|\langle\nabla\rangle
u\big\|_{S^0(I_j)}.\end{align*} Plugging this into \eqref{star} and
noting that $\alpha(p-1)>0$, we can choose $\eta$ to be small enough
such that
\begin{align*}
\big\|\langle\nabla\rangle
u\big\|_{S^0(I_j)}\leq C(E,M,\eta).\end{align*}
Hence we have by the finiteness of $L$
\begin{align}\label{boundnorm}
\big\|\langle\nabla\rangle
u\big\|_{S^0(\R)}\leq C(E,M,\eta,L).\end{align}

When $n=3$, we choose $\epsilon=2_+$, and then $r_\epsilon=3_-$. If
$p\in(\frac73,4]$, then ${2(p-1)(2+\epsilon)}/{\epsilon}>4$ and
$2\leq \frac{3(p-1)(2+\epsilon)}{4+\epsilon}\leq 6$. Arguing as
before, we can estimate $\|\langle\nabla\rangle u\|_{S^0(I_j)}$. If
$p\in (4,5)$, we use interpolation to show that
\begin{align*}
\|u\|_{L_t^{\frac{2(p-1)(2+\epsilon)}{\epsilon}}L_x^\frac{3(p-1)(2+\epsilon)}{4+\epsilon}}\leq C\|u\|^\alpha_{L_t^{4}L_x^4(I_j\times\R^3)}\|u\|^\beta_{L_t^{\infty}L_x^6(I_j\times\R^3)}\|u\|^\gamma_{L_t^{6}L_x^{18}(I_j\times\R^3)},\end{align*}
where  $\alpha>0,\beta,\gamma\geq 0$ satisfy $\alpha+\beta+\gamma=1$ and
\begin{align*}
\begin{cases}
\frac{\epsilon}{2(p-1)(2+\epsilon)}&=\frac{\alpha}{4}+\frac{\beta}{\infty}+\frac{\gamma}{6},\\
\frac{4+\epsilon}{3(p-1)(2+\epsilon)}&=\frac{\alpha}{4}
+\frac{\beta}{6}+\frac{\gamma}{18}.
\end{cases}
\end{align*} It is easy to solve
these equations for $p\in(4,5)$. Since $r_\epsilon\in[2,3_-]$ for
$\epsilon=2_+$, we have
\begin{align*}
\big\|\langle\nabla\rangle(|u|^{p-1}u)
\big\|_{L_t^2L_x^\frac{6}{5}}\lesssim&\big\|\langle\nabla\rangle
u\big\|_{L_t^{2+\epsilon}(I_j;L_x^{r_\epsilon})}\|u\|^{\alpha(p-1)}_{L_t^4L_x^4(\R\times\R^3)}\|u\|^{\beta(p-1)}_{L_t^\infty
H^1_x(I_j\times\R^3)}\|\langle\nabla\rangle u\|^{\gamma(p-1)}_{L_t^{6}L_x^{\frac{18}{7}}(I_j\times\R^3)}\\&\leq C\eta^{\alpha(p-1)}\big\|\langle\nabla\rangle
u\big\|^{1+\gamma(p-1)}_{S^0(I_j)}.\end{align*}
Hence arguing as above we have \eqref{boundnorm} for $n=3$.

We secondly consider the case $\frac4{(p+1)^2}-\la_n<a<0$ and $n\geq4$. Let
$2^*=\frac{2n}{n-2}$. Define
$$\big\|\langle\nabla\rangle u\big\|_{S^0(I)}:=\sup_{(q,r)\in\Lambda_0:r\in[2,(\frac{2n}{n-2})_-]}\big\|\langle\nabla\rangle u\big\|_{L_t^qL_x^r(I\times\R^n)}.$$
For $1-\la_n<a<0$, we have $[(2^*)',2^*]\subset(r_0,r_1)$ and $2^*,
(2^*)'\in (1,n)$. Using the Strichartz estimate and \eqref{RRiesz} ,
we obtain
\begin{align}\nonumber
\big\|\langle\nabla\rangle
u\big\|_{S^0(I_j)}\lesssim&\|u(t_j)\|_{H^1}+\big\|\langle\nabla\rangle(|u|^{p-1}u)
\big\|_{L_t^2L_x^\frac{2n}{n+2}(I_j\times\R^n)}.
\end{align}
If we choose $r_\epsilon$ as before, we have
$r_\epsilon\in[2,2^\ast]\subset (r_0,r_1)$ in this case. Then we can
closely follow the previous argument to obtain \eqref{boundnorm}.
For $\frac4{(p+1)^2}-\la_n\leq a<1-\la_n$,  by using the Strichartz estimate and \eqref{RRiesz}, we instead obtain for some $\alpha>0,\beta\geq1$
\begin{align*}\nonumber
\big\|\langle\nabla\rangle
u\big\|_{S^0(I_j)}\lesssim&\|u(t_j)\|_{H^1}+\big\|\langle\nabla\rangle(|u|^{p-1}u)
\big\|_{L_t^{q_1'}L_x^{(r_1)_-'}(I_j\times\R^n)}\\
\lesssim&\|u(t_j)\|_{H^1}+\|u\|^{\alpha(p-1)}_{L_t^{n+1}L_x^\frac{2(n+1)}{n-1}(I_j\times\R^n)}\big\|\langle\nabla\rangle
u\big\|^\beta_{S^0(I_j)},
\end{align*}
where $(q_1,(r_1)_-)\in\Lambda_0$ and $r_1$ is given in \eqref{r0r1}. Hence we also obtain \eqref{boundnorm}.

Finally, we utilize \eqref{boundnorm} to show asymptotic completeness. We need to prove that there exist unique $u_\pm$ such that
$$\lim_{t\to\pm\infty}\|u(t)-e^{itP_a}u_\pm\|_{H^1_x}=0,\quad P_a=-\Delta+\tfrac{a}{|x|^2}.$$
By time reversal symmetry, it suffices to prove this for positive
times. For $t>0$, we will show that $v(t):=e^{-itP_a}u(t)$ converges
in $H^1_x$ as $t\to+\infty$, and denote $u_+$ to be the limit. In
fact, we obtain by Duhamel's formula
\begin{equation}\label{equ4.21}
v(t)=u_0-i\int_0^te^{-i\tau P_a}(|u|^{p-1}u)(\tau)d\tau.
\end{equation}
Hence, for $0<t_1<t_2$, we have
$$v(t_2)-v(t_1)=-i\int_{t_1}^{t_2}e^{-i\tau P_a}(|u|^{p-1}u)(\tau)d\tau.$$
Arguing as before, we deduce that for some $\alpha>0,\beta\geq1$
\begin{align*}
\|v(t_2)-v(t_1)\|_{H^1(\R^n)}=&\Big\|\int_{t_1}^{t_2}e^{-i\tau P_a}(|u|^{p-1}u)(\tau)d\tau\Big\|_{H^1(\R^n)}\\
\lesssim&\big\|\langle\nabla\rangle(|u|^{p-1}u)
\big\|_{L_t^2L_x^\frac{2n}{n+2}([t_1,t_2]\times\R^n)}\\
\lesssim&\|u\|_{L_t^{n+1}L_x^\frac{2(n+1)}{n-1}([t_1,t_2]\times\R^n)}^{\alpha(p-1)}\big\|\langle\nabla\rangle
u\big\|^\beta_{S^0([t_1,t_2])}
\\
\to&0\quad \text{as}\quad t_1,~t_2\to+\infty.
\end{align*}
As $t$ tends to $+\infty$, the limitation of \eqref{equ4.21} is well
defined. In particular, we find the asymptotic state
$$u_+=u_0-i\int_0^\infty e^{-i\tau P_a}(|u|^{p-1}u)(\tau)d\tau.$$
Therefore, we conclude the proof of Theorem \ref{thm}.




\begin{center}

\end{center}

\end{document}